\newtheorem{thm}{Theorem}
\newtheorem{lem}[thm]{Lemma}
\newtheorem{defi}[thm]{Definition}
\newtheorem{cor}[thm]{Corolary}
\newcommand{\pd}[2]{\frac{\partial #1}{\partial #2}}
\newtheorem{nota}[thm]{Remark}
\begin{document}

\title{On a nonlocal degenerate parabolic problem}


\author{
Rui M.P. Almeida \thanks{Departamento de Matem\'{a}tica, Faculdade de Ci\^{e}ncias, Universidade da Beira Interior, e-mail: ralmeida@ubi.pt web: http://www.mat.ubi.pt/\~{}ralmeida} \and
Stanislav N. Antontsev \thanks{Centro de Matem\'{a}tica e Aplica\c{c}\~{o}es Fundamentais, Faculdade de Ci\^{e}ncias, Universidade de Lisboa, e-mail:anton@ptmat.fc.ul.pt , web: http://cmaf.ptmat.fc.ul.pt}\and
Jos\'{e} C.M. Duque \thanks{Departamento de Matem\'{a}tica, Faculdade de Ci\^{e}ncias, Universidade da Beira Interior, e-mail: jduque@ubi.pt web: http://www.mat.ubi.pt/\~{}jduque} }

\date{\today}
\maketitle

\begin{abstract}
Conditions for the existence and uniqueness of weak solutions for a class of nonlinear nonlocal degenerate parabolic equations are established. The asymptotic behaviour of the solutions as time tends to infinity are also studied. In particular, the finite time extinction and polynomial decay properties are proved.

\textbf{keywords}: nonlocal, degenerate, parabolic, PDE.
\end{abstract}

\section{Introduction}

In this paper, we study parabolic problems with nonlocal nonlinearity of the
following type:
\begin{equation}  \label{prob}
\left\{
\begin{array}{l}
\displaystyle u_t-\left( \int_{\Omega}u^2(x,t)dx\right)^{\gamma} \Delta u=f\left(x,t\right)\,, \quad (x,t)\in \Omega\times ]0,T]\\
\displaystyle u\left(x,t\right) =0\,,\quad (x,t)\in \partial\Omega\times ]0,T] \\
\displaystyle u(x,0)=u_{0}(x)\,, \quad x\in \Omega \\
\end{array}
\right. \,
\end{equation}
where $\Omega$ is a bounded open domain in $\mathbbm{R}^d$, $d\geq  1$, $\gamma$ is a real constant, $f$ and $u_0$ are continuous integrable functions.\\
This type of problem was studied initially by Chipot and Lovat in \cite{CL97}, where they proposed the equation
\begin{equation}\label{eq_cl}
u_t-a(\int_{\Omega} u \ dx)\Delta u =f
\end{equation}
for modelling the density of a population, for example, of bacteria, subject to spreading. This equation also appears in the study of heat propagation or in epidemic theory. In this paper the authors prove the existence and uniqueness of a weak solution to this equation.\\
In \cite{CL97}, the authors studied the problem
\begin{equation}\label{eqcl}
\left\{
\begin{array}{lrl}
u_{t}-a(l(u))\Delta u=f(x,t)&\text{ in }&\Omega \times (0,T) \\
u(x,t)=0&\text{ on }&\partial \Omega \times (0,T) \\
u(x,0)=u_{0}(x)&\text{ in }&\Omega %
\end{array}%
\right.
\end{equation}%
where $\Omega $ is a bounded open subset in $\mathbbm{R}^{d}$, $d\geq 1$, with smooth
boundary $\partial \Omega $, $T$ is some arbitrary time and $a$ is some
function from $\mathbbm{R}$ into $(0,+\infty )$. Both $a$ and $f$ are
continuous functions and $l:L_{2}(\Omega )\rightarrow \mathbbm{R}$ is a continuous
linear form.
The existence, uniqueness and asymptotic behaviour of weak and strong solutions of parabolic equations and systems with nonlocal diffusion terms have been widely studied in the last two decades.\\
In 2000, Ackleh and Ke \cite{AK00}, studied the problem
$$\left\{\begin{array}{lrl}
u_t=\frac{1}{a(\int_{\Omega}u\ dx)}\Delta u+f(u)&\text{ in }&\Omega\times ]0,T]\\
u(x,t)=0&\text{ on }&\partial \Omega\times ]0,T]\\
u(x,0)=u_0(x)&\text{ in }&\overline \Omega
\end{array}\right.,$$
with $a(\xi)>0$ for all $\xi\neq0$, $a(0)\geq 0$ and $f$ Lipschitz-continuous satisfying $f(0)=0$.
They proved the existence and uniqueness of a solution to this problem and gave conditions on $u_0$ for the extinction in finite time and for the persistence of solutions.
The asymptotic behaviour of the solutions as time tends to infinity was studied
by Zheng and Chipot \cite{ZC05} for nonlinear parabolic equations with two
classes of nonlocal terms, in a cylindrical domain.
Recently, Duque et al. \cite{DAAFppa} considered a nonlinear coupled system of reaction-diffusion on
a bounded domain with a more general nonlocal diffusion term working on two
linear forms $l_1$ and $l_2$:
\begin{equation}\label{sys0}
\left\{\begin{array}{lrl}
u_t-a_1(l_1(u),l_2(v))\Delta u+\lambda_1|u|^{p-2}u=f_1(x,t)&\text{ in }&\Omega\times ]0,T]\\
v_t-a_2(l_1(u),l_2(v))\Delta v+\lambda_2|v|^{p-2}v=f_2(x,t)&\text{ in }& \Omega\times ]0,T]\\
\end{array}\right..
\end{equation}
In this case, $u$ and $v$ could describe the
densities of two populations that interact through the functions $a_1$ and $a_2$. The death in species $u$ is assumed to be proportional to $|u|^{p-2}u$ by the factor $\lambda_1\geq 0$ and in species $v$ to be proportional to $|v|^{p-2}v$ by the factor $\lambda_2\geq 0$ with $p\geq1$. The supply of being by external sources is denoted by $f_1$ and $f_2$. The authors proved the existence and uniqueness of weak and strong global in time solutions and imposed conditions, on the data, for these solutions to have the waiting time and stable localization properties. Moreover, important results on polynomial and exponential decay and vanishing of the solutions in finite time were also presented.\\
Robalo et al. \cite{RACFppa} proved the existence and uniqueness of weak and
strong global in time solutions and gave conditions, on the data, for these
solutions to have the exponential decay property for a nonlocal problem with moving boundaries.\\
The numerical analysis and simulation of such problems were less studied (see, for example, \cite{BS09,DAAFppb,ADFRpp,ADFRppb,RACFppb} and their references).\\
In this work, we analyse a different diffusion term, dependent on the $L_2$-norm of the solution. In most of the previous papers, it is assumed that the diffusion term is bounded, with $0<m\leq a(s)\leq M<\infty$, $s\in\mathbbm{R}$, so the problem is always nondegenerate. Here, we study a case were the diffusion term could be zero or infinity. This work is concerned with the proof of the existence, uniqueness and asymptotic behaviour of the weak solutions. To the best of our knowledge, for nonlocal reaction-diffusion equations with this type of diffusion term, these results have not yet been established.\\
The paper is organized as follows. In Section 2, we formulate the problem and the hypotheses on the
data. In Section 3, we define an auxiliary problem and prove the existence of weak solutions for the initial problem. Section 4 is devoted to the proof of the uniqueness of weak solutions. In Section 5, we study the asymptotic behaviour of the weak solution. Finally, in Section 6, we draw some conclusions.

\section{Statement of the problem}
Let us consider the problem of finding the function $u$ which satisfies the
following conditions:
\begin{equation}  \label{probi}
\left\{
\begin{array}{l}
\displaystyle \frac{\partial u}{\partial t}- a(u)\Delta u=f\left(x,t\right)\,, \quad (x,t)\in \Omega\times ]0,T]\\
\displaystyle u\left(x,t\right) =0\,,\quad (x,t)\in \partial\Omega\times ]0,T] \\
\displaystyle u(x,0)=u_{0}(x)\,, \quad x\in \Omega \\
\end{array}
\right. \,
\end{equation}
where $\Omega$ is a bounded open domain in $\mathbbm{R}^d$, $d\geq 1$, $a(u)= \left(\int_{\Omega}u^2(x,t)dx\right)^{\gamma}$ with $\gamma\in\mathbbm{R}$, and $f$ and $u_0$ are continuous integrable functions.\\
If $\gamma=0$, we have the heat equation which is widely known. For $\gamma>0$, the problem could degenerate if there is an extinction phenomenon, and for $\gamma<0$, if the extinction occurs, the problem becomes singular.\\
The definition of a weak solution to this problem is as follows:
\begin{defi}[Weak solution]\label{fraca}
We say that the function $u$ is
a weak solution of problem (\ref{probi}) if
\begin{equation}
u\in L_{2}(0,T;H^{1}(\Omega)),\frac{\partial u}{\partial t}\in L_{2}(0,T;L_{2}(\Omega)),  \label{regx}
\end{equation}
the equality
\begin{equation}\label{fracav}
\int_{\Omega}u_tw\ dx+a(u)\int_{\Omega}\nabla u\cdot\nabla w\ dx=\int_{\Omega}fw\ dx
\end{equation}
is valid for all $w\in H_{0}^{1}(\Omega)$ and $t\in ]0,T[$, and
\begin{equation}\label{condi}
u(x,0)=u_{0}(x),\quad x\in \Omega.
\end{equation}
\end{defi}

\section{Existence of a weak solution}
Since the problem may be degenerate, we consider the auxiliary problem
\begin{equation}  \label{prob_aux}
\left\{
\begin{array}{l}
\displaystyle \frac{\partial u_\varepsilon}{\partial t}-a_\varepsilon(u_\varepsilon) \Delta u_\varepsilon=f\left(x,t\right)\,, \quad (x,t)\in \Omega\times ]0,T]\\
\displaystyle u_\varepsilon\left(x,t\right) =0\,,\quad (x,t)\in \partial\Omega\times ]0,T] \\
\displaystyle u_\varepsilon(x,0)=u_{0}(x)\,, \quad x\in \Omega \\
\end{array}
\right. \,
\end{equation}
where
$$a_{\varepsilon}(u_{\varepsilon})=\left(\min\{a(u_{\varepsilon}),K^2\}+\varepsilon\right)^{\gamma},\quad\varepsilon\in]0,1],$$
with $K$ a finite parameter to be chosen later.

Since $0< m(\gamma,\varepsilon,K)\leq a_\varepsilon\leq M(\gamma,\varepsilon,K)<\infty$, for every $\varepsilon>0$, Problem (\ref{prob_aux}) has a unique strong solution $u_\varepsilon$, which satisfies $\frac{\partial u_\varepsilon}{\partial t}$, $\Delta u_\varepsilon \in L_2(\Omega)$ (see \cite{LSU68}). First, we define the parameter $K$ and next, we obtain estimates, independent of $\varepsilon$, for $u_\varepsilon$ and its derivatives.

\begin{lem}\label{maj2_u}
Let $u_\varepsilon$ be a weak solution of Problem (\ref{prob_aux}). If $u_0\in L_{2k}(\Omega)$ and $f\in L_1(0,T;L_{2k}(\Omega))$, for $k\in\mathbbm{N}$, then
\begin{equation}\label{est_u}
\|u_{\varepsilon}\|_{L_{2k}(\Omega)}\leq \|u_0\|_{L_{2k}(\Omega)}+\int_0^T\| f\|_{L_{2k}(\Omega)}\ dt\leq C,
\end{equation}
where $C$ does not depend on $\varepsilon$.
\end{lem}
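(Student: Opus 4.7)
The plan is to test Problem (\ref{prob_aux}) with $|u_\varepsilon|^{2k-2}u_\varepsilon$, which is admissible because $u_\varepsilon$ is a strong solution (so $u_t \in L_2$ and $\Delta u \in L_2$), and because $u_\varepsilon \in H^1_0(\Omega) \cap L_\infty$ at almost every time (the latter follows from the parabolic regularity in \cite{LSU68}). Multiplying the PDE by this test function and integrating over $\Omega$ gives
\begin{equation*}
\frac{1}{2k}\frac{d}{dt}\int_\Omega |u_\varepsilon|^{2k}\,dx + (2k-1)\,a_\varepsilon(u_\varepsilon)\int_\Omega |u_\varepsilon|^{2k-2}|\nabla u_\varepsilon|^2\,dx = \int_\Omega f\,|u_\varepsilon|^{2k-2}u_\varepsilon\,dx.
\end{equation*}
Since $a_\varepsilon \geq \varepsilon^\gamma > 0$ (if $\gamma \geq 0$) or $a_\varepsilon \geq (K^2+1)^\gamma > 0$ (if $\gamma < 0$), and the gradient integral is nonnegative, the second term on the left can be dropped.

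Next, I would estimate the right-hand side by H\"older's inequality with exponents $2k$ and $2k/(2k-1)$:
\begin{equation*}
\int_\Omega f\,|u_\varepsilon|^{2k-2}u_\varepsilon\,dx \leq \|f\|_{L_{2k}(\Omega)}\,\bigl\||u_\varepsilon|^{2k-1}\bigr\|_{L_{2k/(2k-1)}(\Omega)} = \|f\|_{L_{2k}(\Omega)}\,\|u_\varepsilon\|_{L_{2k}(\Omega)}^{2k-1}.
\end{equation*}
Writing $y(t) := \|u_\varepsilon(\cdot,t)\|_{L_{2k}(\Omega)}^{2k}$, this yields the differential inequality $y'(t) \leq 2k\,\|f\|_{L_{2k}(\Omega)}\,y(t)^{(2k-1)/(2k)}$.

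The main obstacle is that dividing formally by $y^{(2k-1)/(2k)}$ to obtain $(y^{1/(2k)})' \leq \|f\|_{L_{2k}(\Omega)}$ is not justified at instants where $y$ vanishes. I would handle this by a standard regularization: set $y_\delta(t) := y(t) + \delta$ with $\delta > 0$, and note that
\begin{equation*}
\frac{d}{dt}\bigl(y_\delta^{1/(2k)}\bigr) = \tfrac{1}{2k}\,y_\delta^{1/(2k)-1}\,y'(t) \leq y_\delta^{(1-2k)/(2k)}\,\|f\|_{L_{2k}(\Omega)}\,y^{(2k-1)/(2k)} \leq \|f\|_{L_{2k}(\Omega)}.
\end{equation*}
Integrating from $0$ to $t$ and then letting $\delta \to 0^+$ gives $\|u_\varepsilon(\cdot,t)\|_{L_{2k}(\Omega)} \leq \|u_0\|_{L_{2k}(\Omega)} + \int_0^t \|f(\cdot,s)\|_{L_{2k}(\Omega)}\,ds$, and enlarging the integral to $[0,T]$ proves (\ref{est_u}). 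The hypotheses $u_0 \in L_{2k}(\Omega)$ and $f \in L_1(0,T;L_{2k}(\Omega))$ ensure that the right-hand side is a finite constant $C$ independent of $\varepsilon$, as required.
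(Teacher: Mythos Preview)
Your proof is correct and follows essentially the same route as the paper: test with $u_\varepsilon^{2k-1}$, drop the nonnegative gradient term, apply H\"older with exponents $2k$ and $\frac{2k}{2k-1}$, and integrate the resulting differential inequality. You are actually more careful than the paper in two places---writing the test function as $|u_\varepsilon|^{2k-2}u_\varepsilon$ and handling the division by $y^{(2k-1)/(2k)}$ via the $\delta$-regularization---whereas the paper simply writes ``simplifying the factor $\|u_\varepsilon\|_{L_{2k}(\Omega)}^{2k-1}$'' without comment.
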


\begin{proof}
Multiplying the first equation of Problem (\ref{prob_aux}) by $u_\varepsilon^{2k-1}$ and integrating in $\Omega$, we arrive at
$$\frac{1}{2k}\frac{d}{dt}\| u_\varepsilon\|_{L_{2k}(\Omega)}^{2k}+(2k-1)a_\varepsilon(u_\varepsilon)\int_{\Omega}u_\varepsilon^{2k-2}|\nabla u_\varepsilon|^2\ dx =\int_{\Omega} fu_\varepsilon^{2k-1}\ dx.$$
Applying the H\"older inequality and ignoring the second term ( since it is non-negative ) on the left hand side, we obtain
$$\frac{1}{2k}\frac{d}{dt}\| u_\varepsilon\|_{L_{2k}(\Omega)}^{2k} \leq \| f\|_{L_{2k}(\Omega)} \|u_\varepsilon\|_{L_{2k}(\Omega)}^{2k-1}.$$
Simplifying the factor $\|u_\varepsilon\|_{L_{2k}(\Omega)}^{2k-1}$ and integrating in $t$, we have (\ref{est_u}).
\end{proof}



\begin{cor}\label{maj_a}
Let $u_\varepsilon$ be a weak solution of Problem (\ref{probi}). If the conditions of Lemma \ref{maj2_u} are fulfilled for $k=2$, then we have
\begin{equation}\label{Ma}
a_{\varepsilon}(u_{\varepsilon})=\left(\int_\Omega u_\varepsilon^2\ dx +\varepsilon\right)^{\gamma}
\end{equation}
and
$$0<(K^2+1)^\gamma\leq a_{\varepsilon}(u_{\varepsilon})\leq\varepsilon^\gamma<\infty,\text{ if }\gamma<0,$$
$$0<\varepsilon^\gamma\leq a_{\varepsilon}(u_{\varepsilon})\leq(K^2+1)^\gamma<\infty,\text{ if }\gamma>0,$$
with $$K=\|u_0\|_{L_{2}(\Omega)}+\int_0^T\| f\|_{L_{2}(\Omega)}\ dt<\infty.$$
\end{cor}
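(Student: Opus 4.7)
The plan is to show that the truncation at $K^2$ in the definition of $a_\varepsilon$ is never active, so that (\ref{Ma}) reduces to a direct application of Lemma~\ref{maj2_u}, and the two-sided bounds then follow from elementary monotonicity of the power function $s\mapsto s^\gamma$. The cleverness of the auxiliary problem is precisely that $K$ was defined \emph{after} the fact to match the a priori estimate; the corollary is the place where this bookkeeping pays off.

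First I would apply Lemma~\ref{maj2_u} with $k=1$ --- the $k=2$ hypothesis implies the $k=1$ one, since $\Omega$ is bounded and hence $L_{2k}(\Omega)\hookrightarrow L_2(\Omega)$ and $L_1(0,T;L_{2k}(\Omega))\hookrightarrow L_1(0,T;L_2(\Omega))$ --- to deduce, for every $t\in[0,T]$,
$$\|u_\varepsilon(\cdot,t)\|_{L_2(\Omega)}\;\leq\;\|u_0\|_{L_2(\Omega)}+\int_0^T\|f\|_{L_2(\Omega)}\,dt\;=\;K.$$
Squaring gives $\int_\Omega u_\varepsilon^2\,dx\leq K^2$ uniformly in $t$ and $\varepsilon$, so the inner truncation in the definition of $a_\varepsilon$ satisfies $\min\bigl\{\int_\Omega u_\varepsilon^2\,dx,\,K^2\bigr\}=\int_\Omega u_\varepsilon^2\,dx$, and the formula (\ref{Ma}) follows immediately.

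For the explicit two-sided bounds, I would use the same $L_2$ estimate together with $\varepsilon\in\,]0,1]$ to sandwich the base of the power:
$$0<\varepsilon\;\leq\;\int_\Omega u_\varepsilon^2\,dx+\varepsilon\;\leq\;K^2+\varepsilon\;\leq\;K^2+1.$$
Since $s\mapsto s^\gamma$ is increasing on $(0,\infty)$ for $\gamma>0$ and decreasing for $\gamma<0$, raising this chain to the $\gamma$-th power produces the two stated cases. Finally, finiteness of $K$ is immediate from $u_0\in L_2(\Omega)$ and $f\in L_1(0,T;L_2(\Omega))$.

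I do not foresee any real obstacle here: the entire content of the corollary is that the cutoff parameter $K$ was chosen a priori to coincide with the a posteriori $L_2$ bound from Lemma~\ref{maj2_u}, making the truncation redundant. The only mild subtleties worth mentioning are that $L_2$ control is all that is needed (so requiring $k=2$ is slightly more than strictly necessary), and that the convention $\varepsilon\leq 1$ is what allows one to replace $K^2+\varepsilon$ by the cleaner constant $K^2+1$ in the final bounds.
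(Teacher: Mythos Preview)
Your proposal is correct and is exactly the argument the paper has in mind; the corollary is stated without proof there, and your reasoning---use the $L_2$ a priori bound from Lemma~\ref{maj2_u} to see that the cutoff at $K^2$ is never active, then read off the two-sided bounds by monotonicity of $s\mapsto s^\gamma$ on $[\varepsilon,K^2+1]$---is the intended one. Your side remarks (that $k=1$ already suffices, and that $\varepsilon\le1$ is what turns $K^2+\varepsilon$ into $K^2+1$) are accurate and worth keeping.
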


\begin{lem}\label{maj_ux}
Let $u_\varepsilon$ be a weak solution of Problem (\ref{prob_aux}). If $u_0\in H_0^1(\Omega)$ and $f\in L_2(0,T;H_0^1(\Omega))$,
then\\

$\displaystyle \int_{\Omega}|\nabla u_\varepsilon|^2\ dx +\int_0^Ta_\varepsilon\int_{\Omega}(\Delta u_\varepsilon)^2\ dxdt$
\begin{equation}\label{est_ux}
\leq C\int_{\Omega}|\nabla u_0|^2\ dx+C\int_0^T\int_{\Omega} |\nabla f|^2\ dxdt\leq C,
\end{equation}
where $C$ does not depend on $\varepsilon$.
\end{lem}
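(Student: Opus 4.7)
My plan is to obtain the bound via the standard energy estimate that results from testing the auxiliary equation with $-\Delta u_\varepsilon$. Since \cite{LSU68} guarantees $\Delta u_\varepsilon\in L_2(\Omega)$ for the strong solution of (\ref{prob_aux}), this multiplier is admissible. After integrating over $\Omega$, the parabolic term is handled by one integration by parts: because $u_\varepsilon(\cdot,t)=0$ on $\partial\Omega$ for each $t$, differentiating in $t$ gives $u_{\varepsilon,t}=0$ on $\partial\Omega$, so the boundary term vanishes and
$$-\int_\Omega u_{\varepsilon,t}\Delta u_\varepsilon\,dx=\tfrac12\tfrac{d}{dt}\int_\Omega|\nabla u_\varepsilon|^2\,dx.$$
The diffusion term, since $a_\varepsilon$ depends only on $t$ (through $\|u_\varepsilon(\cdot,t)\|_{L_2}$), factors out and produces exactly the dissipation $a_\varepsilon(u_\varepsilon)\int_\Omega(\Delta u_\varepsilon)^2\,dx$.

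The resulting identity is
$$\tfrac12\tfrac{d}{dt}\|\nabla u_\varepsilon\|_{L_2(\Omega)}^2+a_\varepsilon(u_\varepsilon)\|\Delta u_\varepsilon\|_{L_2(\Omega)}^2=-\int_\Omega f\,\Delta u_\varepsilon\,dx.$$
I would then integrate by parts once more on the right, using $f(\cdot,t)\in H_0^1(\Omega)$ to kill the boundary term, and apply Young's inequality:
$$-\int_\Omega f\,\Delta u_\varepsilon\,dx=\int_\Omega\nabla f\cdot\nabla u_\varepsilon\,dx\leq \tfrac12\|\nabla f\|_{L_2(\Omega)}^2+\tfrac12\|\nabla u_\varepsilon\|_{L_2(\Omega)}^2.$$
Integrating in time on $(0,t)$ with $t\leq T$, and using that $a_\varepsilon\geq 0$ (no uniform-in-$\varepsilon$ lower bound is needed, since $a_\varepsilon$ stays inside the integral on the left), I get
$$\|\nabla u_\varepsilon(t)\|_{L_2(\Omega)}^2+2\int_0^t a_\varepsilon\|\Delta u_\varepsilon\|_{L_2(\Omega)}^2\,ds\leq \|\nabla u_0\|_{L_2(\Omega)}^2+\int_0^T\|\nabla f\|_{L_2(\Omega)}^2\,ds+\int_0^t\|\nabla u_\varepsilon\|_{L_2(\Omega)}^2\,ds.$$

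From here Gronwall's inequality applied to $y(t):=\|\nabla u_\varepsilon(t)\|_{L_2(\Omega)}^2$ gives a bound on $y(t)$ by $e^{T}$ times the data terms $\|\nabla u_0\|_{L_2(\Omega)}^2+\int_0^T\|\nabla f\|_{L_2(\Omega)}^2\,ds$, uniformly in $\varepsilon$. Substituting this bound back into the integrated inequality then yields the desired control of the dissipation term, giving (\ref{est_ux}) with a constant $C$ depending only on $T$, $\|u_0\|_{H_0^1(\Omega)}$ and $\|f\|_{L_2(0,T;H_0^1(\Omega))}$. The only mildly delicate point is justifying the first integration by parts rigorously — this is precisely where the strong-solution regularity $u_{\varepsilon,t},\,\Delta u_\varepsilon\in L_2(\Omega)$ from the reference is invoked; the rest is routine.
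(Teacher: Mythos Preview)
Your argument is correct and follows essentially the same route as the paper: multiply the equation by $-\Delta u_\varepsilon$, integrate by parts to turn the parabolic term into $\tfrac12\tfrac{d}{dt}\|\nabla u_\varepsilon\|_{L_2}^2$, use Green's formula (with $f\in H_0^1$) on the source term, apply the Cauchy/Young inequality, and close with Gronwall. The paper is slightly terser (it does not spell out the boundary-term justifications you include), but the method and the resulting estimate are the same.
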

\begin{proof}
Multiplying the same equation as before by $\Delta u_\varepsilon$ and integrating in $\Omega$, we conclude that
$$\frac12\frac{d}{dt}\int_{\Omega}|\nabla u_\varepsilon|^2\ dx+a_\varepsilon\int_{\Omega}(\Delta u_\varepsilon)^2\ dx=-\int_{\Omega} f\Delta u_\varepsilon\ dx.$$
If we apply Green's theorem to the right hand side, the last equation becomes
$$\frac12\frac{d}{dt}\int_{\Omega}|\nabla u_\varepsilon|^2\ dx+a_\varepsilon\int_{\Omega}(\Delta u_\varepsilon)^2\ dx=\int_{\Omega} \nabla f\cdot \nabla u_\varepsilon\ dx.$$
Using the H\"older and Cauchy inequalities, we obtain
$$\frac12\frac{d}{dt}\int_{\Omega}|\nabla u_\varepsilon|^2\ dx+a_\varepsilon\int_{\Omega}(\Delta u_\varepsilon)^2\ dx\leq \frac12\int_{\Omega} |\nabla u_\varepsilon|^2\ dx+\frac12\int_{\Omega} |\nabla f|^2\ dx.$$
Hence, applying Gronwall's lemma, we conclude that (\ref{est_ux}) is true.
\end{proof}

\begin{cor}\label{maj_uxx1}
If the conditions of Lemma \ref{maj_ux} are fulfilled and $\gamma<0$, then
$$\int_0^T\int_{\Omega}(\Delta u_\varepsilon)^2\ dxdt\leq C,$$
with $u_\varepsilon$ a weak solution of Problem (\ref{prob_aux}).
\end{cor}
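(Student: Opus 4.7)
The plan is to use the estimate from Lemma \ref{maj_ux} together with the $\varepsilon$-uniform lower bound on $a_\varepsilon$ provided by Corollary \ref{maj_a} in the case $\gamma<0$.

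Specifically, Lemma \ref{maj_ux} gives
\[
\int_0^T a_\varepsilon(u_\varepsilon)\int_\Omega (\Delta u_\varepsilon)^2\,dx\,dt\leq C,
\]
with $C$ independent of $\varepsilon$. When $\gamma<0$, Corollary \ref{maj_a} supplies the pointwise (in $t$) lower bound $a_\varepsilon(u_\varepsilon)\geq (K^2+1)^\gamma>0$, which is a positive constant depending only on $\|u_0\|_{L_2}$, $\|f\|_{L_1(0,T;L_2)}$ and $\gamma$ — crucially, not on $\varepsilon$.

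So the key step is simply to factor this lower bound out of the time integral:
\[
(K^2+1)^\gamma\int_0^T\!\!\int_\Omega (\Delta u_\varepsilon)^2\,dx\,dt
\;\leq\;\int_0^T a_\varepsilon(u_\varepsilon)\int_\Omega (\Delta u_\varepsilon)^2\,dx\,dt\;\leq\;C,
\]
and divide through by $(K^2+1)^\gamma$ to conclude
\[
\int_0^T\!\!\int_\Omega (\Delta u_\varepsilon)^2\,dx\,dt\leq (K^2+1)^{-\gamma}\,C.
\]

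There is no real obstacle here: the whole content is that, for $\gamma<0$, the coefficient $a_\varepsilon$ is bounded below away from zero uniformly in $\varepsilon$ (its potential blow-up is only from above, at $\varepsilon^\gamma$), so the $a_\varepsilon$-weighted $L^2$ bound on $\Delta u_\varepsilon$ from Lemma \ref{maj_ux} immediately upgrades to an unweighted one. The same argument would fail for $\gamma>0$, where the lower bound is only $\varepsilon^\gamma\to 0$; that is exactly why the corollary is restricted to $\gamma<0$.
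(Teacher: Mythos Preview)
Your argument is correct and is exactly the intended one: the paper states this as a corollary without proof because it follows immediately from the $a_\varepsilon$-weighted estimate of Lemma~\ref{maj_ux} together with the $\varepsilon$-independent lower bound $a_\varepsilon\geq (K^2+1)^\gamma>0$ from Corollary~\ref{maj_a} when $\gamma<0$. Your remark contrasting this with the $\gamma>0$ case (where only $a_\varepsilon\geq \varepsilon^\gamma$ is available, yielding the companion corollary with the $\varepsilon^{-\gamma}$ factor) is also on point.
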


\begin{cor}\label{maj_uxx1}
If the conditions of Lemma \ref{maj_ux} are fulfilled and $\gamma>0$, then
$$\int_0^T\int_{\Omega}(\Delta u_\varepsilon)^2\ dxdt\leq C\varepsilon^{-\gamma},$$
with $u_\varepsilon$ a weak solution of Problem (\ref{prob_aux}).
\end{cor}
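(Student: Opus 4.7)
The proof plan is essentially a one-line argument that mirrors the previous corollary, but with a different lower bound on $a_\varepsilon$. The only difference between the two corollaries is the sign of $\gamma$, which determines whether the $\varepsilon$-independent lower bound $(K^2+1)^\gamma$ or the $\varepsilon$-dependent lower bound $\varepsilon^\gamma$ is the effective one when isolating $\int_0^T\int_\Omega(\Delta u_\varepsilon)^2\,dx\,dt$ from the mixed estimate of Lemma \ref{maj_ux}.

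Concretely, I would start from the estimate
\[
\int_0^T a_\varepsilon(u_\varepsilon)\int_\Omega (\Delta u_\varepsilon)^2\,dx\,dt \leq C,
\]
which is contained in (\ref{est_ux}) and holds under the stated hypotheses on $u_0$ and $f$, independently of the sign of $\gamma$. Next I would invoke Corollary \ref{maj_a}: for $\gamma>0$ we have the pointwise (in $t$) lower bound
\[
a_\varepsilon(u_\varepsilon)=\left(\int_\Omega u_\varepsilon^2\,dx+\varepsilon\right)^\gamma \geq \varepsilon^\gamma,
\]
simply because the integral term is non-negative and $\gamma>0$. Substituting this lower bound into the previous inequality gives
\[
\varepsilon^\gamma\int_0^T\!\!\int_\Omega (\Delta u_\varepsilon)^2\,dx\,dt \leq \int_0^T a_\varepsilon(u_\varepsilon)\int_\Omega (\Delta u_\varepsilon)^2\,dx\,dt \leq C,
\]
and dividing by $\varepsilon^\gamma>0$ yields the claimed bound $C\varepsilon^{-\gamma}$.

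There is no real obstacle here: the argument is just the observation that for $\gamma>0$ the diffusion coefficient can degenerate to zero only at rate $\varepsilon^\gamma$, so the control of the Laplacian can only be purchased at a cost of $\varepsilon^{-\gamma}$. The contrast with the previous corollary is that for $\gamma<0$ the relevant lower bound $(K^2+1)^\gamma$ is independent of $\varepsilon$, whereas here it is not, and one should expect this estimate to blow up as $\varepsilon\to 0^+$; hence any subsequent passage to the limit in the case $\gamma>0$ will have to avoid using $\Delta u_\varepsilon$ directly and rely instead on the weaker $H^1$ estimate from Lemma \ref{maj_ux}.
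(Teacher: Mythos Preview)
Your argument is correct and is exactly the intended one: the corollary is an immediate consequence of the estimate in Lemma \ref{maj_ux} combined with the lower bound $a_\varepsilon(u_\varepsilon)\geq \varepsilon^\gamma$ from Corollary \ref{maj_a} for $\gamma>0$. The paper itself gives no separate proof, treating this as an obvious consequence of those two results, and your write-up fills in precisely that reasoning.
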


To prove the bounds for the temporal derivative, we will consider the two different cases:  $\gamma\geq0$ and  $\gamma<0$.

\begin{lem}\label{maj_ut}
Let $u_\varepsilon$ be a weak solution of Problem (\ref{prob_aux}) with $\gamma\geq 0$. If $u_0\in H_0^1(\Omega)$ and $f\in L_2(0,T;H_0^1(\Omega))$, then\\

$\displaystyle \int_0^T\int_{\Omega}\left(\frac{\partial u_\varepsilon}{\partial t}\right)^2\ dxdt$
\begin{equation}\label{est_ut}
\leq C\left(\int_{\Omega}|\nabla u_0|^2\ dx+\int_0^T\int_{\Omega} |\nabla f|^2\ dxdt +\int_0^T\int_{\Omega} f^2\ dxdt\right),
\end{equation}
where $C$ does not depend on $\varepsilon$.
\end{lem}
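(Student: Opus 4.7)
The plan is to exploit the fact that, for $\gamma\geq 0$, Corollary \ref{maj_a} provides a uniform upper bound $a_\varepsilon(u_\varepsilon)\leq M:=(K^2+1)^\gamma$ independent of $\varepsilon$ and $t$. Since $u_\varepsilon$ is a strong solution of Problem (\ref{prob_aux}) with $\Delta u_\varepsilon$ and $\partial_t u_\varepsilon$ in $L_2(\Omega)$, the PDE can be used pointwise almost everywhere in the form
$$\frac{\partial u_\varepsilon}{\partial t}=a_\varepsilon(u_\varepsilon)\,\Delta u_\varepsilon+f.$$
I would square both sides and integrate over $\Omega\times(0,T)$, rather than following the usual "multiply by $u_{\varepsilon,t}$ and integrate by parts in time" route, because the latter forces one to deal with the time derivative of $a_\varepsilon(u_\varepsilon)$.

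Applying $(A+B)^2\leq 2A^2+2B^2$ yields
$$\int_0^T\int_\Omega u_{\varepsilon,t}^2\,dx\,dt\leq 2\int_0^T a_\varepsilon^2\int_\Omega(\Delta u_\varepsilon)^2\,dx\,dt+2\int_0^T\int_\Omega f^2\,dx\,dt.$$
The central trick is the factorisation $a_\varepsilon^2=a_\varepsilon\cdot a_\varepsilon\leq M\,a_\varepsilon$, which converts the first term on the right into $M\int_0^T a_\varepsilon\int_\Omega(\Delta u_\varepsilon)^2\,dx\,dt$. By Lemma \ref{maj_ux} this weighted integral is already bounded by $C\bigl(\|\nabla u_0\|_{L_2(\Omega)}^2+\int_0^T\int_\Omega|\nabla f|^2\,dx\,dt\bigr)$, with $C$ independent of $\varepsilon$. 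Together with the explicit $\int_0^T\int_\Omega f^2\,dx\,dt$ term, this is exactly the bound (\ref{est_ut}).

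The step that does the real work is the recognition that $a_\varepsilon^2\leq M a_\varepsilon$, which lets me borrow Lemma \ref{maj_ux}'s weighted bound instead of trying to control $\|\Delta u_\varepsilon\|_{L_2}^2$ directly (something that is not uniformly available as $\varepsilon\to 0$, cf.\ the two Corollaries following Lemma \ref{maj_ux}). The genuine difficulty, deferred to the complementary case $\gamma<0$, will be the absence of any $\varepsilon$-independent upper bound on $a_\varepsilon$; there one cannot use this shortcut and will presumably be forced to test with $u_{\varepsilon,t}$ and integrate by parts in time, carefully exploiting the monotonicity structure of $t\mapsto a_\varepsilon(u_\varepsilon(t))$.
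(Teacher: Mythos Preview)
Your argument is correct and coincides with the paper's proof in all essential respects. The paper tests the equation with $\partial_t u_\varepsilon$ and then applies Cauchy's inequality to the two cross terms, which produces exactly the same inequality you obtain by squaring the equation and using $(A+B)^2\le 2A^2+2B^2$; in both cases one arrives at a right-hand side dominated by $\int_0^T a_\varepsilon^2\int_\Omega(\Delta u_\varepsilon)^2\,dx\,dt+\int_0^T\int_\Omega f^2\,dx\,dt$, and the crucial step---bounding $a_\varepsilon^2\le (K^2+1)^\gamma a_\varepsilon$ and invoking Lemma~\ref{maj_ux}---is identical. Your closing remark about the $\gamma<0$ case is slightly off: the paper does not exploit any monotonicity of $t\mapsto a_\varepsilon(u_\varepsilon(t))$ but instead shows (Lemma~\ref{lema_t*2}) that $\int_\Omega u_\varepsilon^2\,dx$ stays bounded below on a short time interval, which again yields a uniform upper bound on $a_\varepsilon$ and allows the same argument to go through locally in time.
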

\begin{proof}
Multiplying the first equation of Problem (\ref{prob_aux}) by $\frac{\partial u_\varepsilon}{\partial t}$ and integrating in $\Omega\times[0,T]$, we arrive at the inequality
$$\int_0^T\int_{\Omega}\left(\frac{\partial u_\varepsilon}{\partial t}\right)^2\ dxdt=\int_0^Ta_\varepsilon\int_{\Omega}\Delta u_\varepsilon \frac{\partial u_\varepsilon}{\partial t}\ dxdt + \int_0^T\int_{\Omega} f\frac{\partial u_\varepsilon}{\partial t}\ dxdt.$$
Applying Cauchy's inequality to the right hand side, we obtain
$$\int_0^T\int_{\Omega}\left(\frac{\partial u_\varepsilon}{\partial t}\right)^2\ dxdt\leq\frac14\int_0^T\int_{\Omega}\left(\frac{\partial u_\varepsilon}{\partial t}\right)^2\ dxdt+\int_0^Ta_\varepsilon^2\int_{\Omega}(\Delta u_\varepsilon)^2\ dxdt$$
$$+\frac14\int_0^T\int_{\Omega}\left(\frac{\partial u_\varepsilon}{\partial t}\right)^2\ dxdt+\int_0^T\int_{\Omega} f^2\ dxdt,$$
that is,
$$\frac12\int_0^T\int_{\Omega}\left(\frac{\partial u_\varepsilon}{\partial t}\right)^2\ dxdt\leq(K^2+1)^{2\gamma}\int_0^Ta_\varepsilon\int_{\Omega}(\Delta u_\varepsilon)^2\ dxdt+\int_0^T\int_{\Omega} f^2\ dxdt.$$
Using Lemma \ref{maj_ux}, we obtain
$$\int_0^T\int_{\Omega}\left(\frac{\partial u_\varepsilon}{\partial t}\right)^2\ dxdt\leq C\int_{\Omega}|\nabla u_0|^2\ dx+C\int_0^T\int_{\Omega} |\nabla f|^2\ dxdt +2\int_0^T\int_{\Omega} f^2\ dxdt$$
and the result follows.
\end{proof}

\begin{lem}\label{lema_t*2}
If $u_0\in H_0^1(\Omega)$, $f\in L_2(0,T;L_2(\Omega))$, $\int_{\Omega} u_0\ dx>0$ and $\gamma< 0$, then there exists a $t^*>0$ such that $a_{\varepsilon}(u_{\varepsilon})\leq M<\infty$ for $t\in[0,t^*]$ with $u_{\varepsilon}$ a weak solution of Problem (\ref{prob_aux}).
\end{lem}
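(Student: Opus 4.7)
The plan is a bootstrap (continuation) argument on $\phi_\varepsilon(t) := \int_\Omega u_\varepsilon^2(x,t)\,dx$. Since $\gamma<0$, Corollary~\ref{maj_a} gives $a_\varepsilon=(\phi_\varepsilon+\varepsilon)^\gamma$, which is decreasing in $\phi_\varepsilon$, so an upper bound on $a_\varepsilon$ is equivalent to a lower bound on $\phi_\varepsilon$. The assumption $\int_\Omega u_0\,dx>0$ forces $u_0\not\equiv 0$, hence $c_0:=\phi_\varepsilon(0)=\|u_0\|_{L_2(\Omega)}^2>0$, and the target reduces to showing $\phi_\varepsilon(t)\geq c_0/2$ on some $[0,t^*]$ with $t^*$ independent of $\varepsilon$.

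As preparation I first establish a uniform-in-$\varepsilon$ bound $\|\nabla u_\varepsilon(t)\|_{L_2(\Omega)}^2\leq C_1$. Lemma~\ref{maj_ux} is not directly applicable because the present hypothesis is only $f\in L_2(0,T;L_2(\Omega))$, but its strategy adapts when $\gamma<0$: test the PDE of~(\ref{prob_aux}) with $-\Delta u_\varepsilon$ and absorb the forcing term via the weighted Cauchy inequality $|f\Delta u_\varepsilon|\leq \frac{1}{2a_\varepsilon}f^2+\frac{a_\varepsilon}{2}(\Delta u_\varepsilon)^2$. The lower bound $a_\varepsilon\geq(K^2+1)^\gamma$ from Corollary~\ref{maj_a} gives $1/a_\varepsilon\leq(K^2+1)^{-\gamma}$, and integration in time produces $C_1:=\|\nabla u_0\|_{L_2}^2+(K^2+1)^{-\gamma}\|f\|_{L_2(0,T;L_2)}^2$, independent of $\varepsilon$.

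Now set $T_0:=\sup\{\tau\in(0,T]:\phi_\varepsilon(s)\geq c_0/2\text{ for all }s\in[0,\tau]\}$. Continuity of $\phi_\varepsilon$ (which follows from $u_\varepsilon\in C([0,T];L_2(\Omega))$) ensures $T_0>0$, and on $[0,T_0]$ the inequality $a_\varepsilon\leq(c_0/2)^\gamma=:M$ holds. The $L_2$-energy identity (obtained by testing the PDE with $u_\varepsilon$) yields
\[
c_0-\phi_\varepsilon(t)=2\int_0^t a_\varepsilon\|\nabla u_\varepsilon\|_{L_2}^2\,ds-2\int_0^t\int_\Omega fu_\varepsilon\,dx\,ds,
\]
and combining $a_\varepsilon\leq M$, $\|\nabla u_\varepsilon\|_{L_2}^2\leq C_1$, $\|u_\varepsilon\|_{L_2}\leq K$ (Lemma~\ref{maj2_u}) with Cauchy--Schwarz in time produces
\[
c_0-\phi_\varepsilon(t)\leq 2MC_1\,t+2K\sqrt{t}\,\|f\|_{L_2(0,T;L_2(\Omega))}=:g(t)\quad\text{on }[0,T_0].
\]

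Finally I pick $t^*>0$, depending only on $c_0$, $K$, $M$, $C_1$ and $\|f\|_{L_2(0,T;L_2)}$ (hence not on $\varepsilon$), so small that $g(t^*)<c_0/2$. If $T_0\leq t^*$ then $\phi_\varepsilon(T_0)\geq c_0-g(T_0)>c_0/2$, contradicting the definition of $T_0$ (which, by continuity, would force $\phi_\varepsilon(T_0)=c_0/2$ when $T_0<T$). Hence $T_0>t^*$, so $\phi_\varepsilon(t)\geq c_0/2$ on $[0,t^*]$, and consequently $a_\varepsilon(u_\varepsilon)(t)\leq M<\infty$ for $t\in[0,t^*]$, as required. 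The main subtlety is the circular character of the estimate: bounding $\int_0^t a_\varepsilon\|\nabla u_\varepsilon\|^2$ by something that vanishes with $t$ requires precisely the upper bound on $a_\varepsilon$ one wants to conclude, and the continuation step is what closes this loop.
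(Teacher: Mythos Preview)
Your argument is correct, but it differs from the paper's. The paper tests the equation with $\partial_t u_\varepsilon$ (not $-\Delta u_\varepsilon$ and $u_\varepsilon$) and, after dividing by $a_\varepsilon$ and using $1/a_\varepsilon\le (K^2+1)^{|\gamma|}$ on the right while keeping $1/a_\varepsilon\ge \phi_\varepsilon^{|\gamma|}$ on the left, obtains the weighted estimate
\[
\int_0^T \phi_\varepsilon(t)^{|\gamma|}\int_\Omega (\partial_t u_\varepsilon)^2\,dx\,dt \;+\;\|\nabla u_\varepsilon(t)\|_{L_2}^2 \;\le\; \|\nabla u_0\|_{L_2}^2 + C\|f\|_{L_2(0,T;L_2)}^2 =: C_1.
\]
It then introduces the nonlinear functional $b(t)=\phi_\varepsilon(t)^{\beta}$ with $\beta=\tfrac{|\gamma|+1}{2}$, so that $|b'(t)|^2\le 4\beta^2\,\phi_\varepsilon^{|\gamma|}\int_\Omega(\partial_t u_\varepsilon)^2\,dx$ and hence $\int_0^T|b'|^2\,dt\le C_1$; a direct Cauchy--Schwarz in time then gives $b(t)\ge b(0)-\sqrt{C_1 t}$, which stays positive for $t\le t^*:=b(0)^2/C_1$, and $a_\varepsilon\le \phi_\varepsilon^{-|\gamma|}=\phi_\varepsilon/b^2\le M$ follows without any bootstrap.

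The trade-off: the paper's choice of $b$ is tailored so that the degenerate weight $\phi_\varepsilon^{|\gamma|}$ disappears into the chain rule, yielding an explicit $t^*$ in one stroke and bypassing the circularity you flag at the end. Your route is more elementary (only the standard test functions $u_\varepsilon$ and $-\Delta u_\varepsilon$, plus a textbook continuation argument) and perhaps more robust to variations of the problem, at the cost of the extra open--closed step. Both use the hypothesis $\int_\Omega u_0\,dx>0$ only through its consequence $c_0=\|u_0\|_{L_2}^2>0$.
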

\begin{proof}
Multiplying the first equation of Problem (\ref{prob_aux}) by $ \frac{\partial u_\varepsilon}{\partial t}$ and integrating in $\Omega$, we arrive at the equality
$$\int_{\Omega}\left(\frac{\partial u_\varepsilon}{\partial t}\right)^2\ dx-a_\varepsilon\int_{\Omega}\Delta u_\varepsilon \frac{\partial u_\varepsilon}{\partial t}\ dx = \int_{\Omega} f\frac{\partial u_\varepsilon}{\partial t}\ dx.$$
Applying Green's theorem to the second term on the left hand side and Cauchy's inequality to the right hand side, we obtain
$$\frac{1}{a_\varepsilon}\int_{\Omega}\left(\frac{\partial u_\varepsilon}{\partial t}\right)^2\ dx+\frac{d}{dt}\int_{\Omega}|\nabla u_\varepsilon|^2\ dx\leq \frac{1}{a_\varepsilon}\int_{\Omega} f^2\ dx.$$
Integrating in $t$ and using the bounds of $a$,  we conclude that\\

$\displaystyle \int_0^T\left(\int_{\Omega} u_\varepsilon^2\ dx\right)^{|\gamma|}\int_{\Omega}\left(\frac{\partial u_\varepsilon}{\partial t}\right)^2\ dxdt+\int_{\Omega}|\nabla u_\varepsilon|^2\ dx$\\

\begin{equation}\label{eq_aux}\hfill\displaystyle\leq \int_{\Omega}|\nabla u_0|^2\ dx+C\int_0^T\int_{\Omega} f^2\ dxdt.\end{equation}
Now we consider the functional
$$b(t)=\left(\int_{\Omega} u_\varepsilon^2\ dx\right)^{\beta},$$
with $\beta=\frac{|\gamma|+1}{2}$. Differentiating $b$, we obtain\\

$\displaystyle |b'(t)|=2\beta \left|\int_{\Omega} u_\varepsilon\frac{\partial u_\varepsilon}{\partial t}\ dx\right|\left(\int_{\Omega} u_\varepsilon^2\ dx\right)^{\beta-1}$\\

$\displaystyle \phantom{|b'(t)|}\leq 2\beta \left(\int_{\Omega} \left(\frac{\partial u_\varepsilon}{\partial t}\right)^2\ dx\right)^{\frac12}\left(\int_{\Omega} u_\varepsilon^2\ dx\right)^{\beta-\frac12}.$\\
Thus, using (\ref{eq_aux}) we conclude that
$$\int_0^T|b'(t)|^2\ dt\leq 4\beta^2 \int_0^T\int_{\Omega} \left(\frac{\partial u_\varepsilon}{\partial t}\right)^2\ dx\left(\int_{\Omega} u_\varepsilon^2\ dx\right)^{|\gamma|}dt\leq C_1,$$
where $C_1$ depends on $\int_{\Omega}|\nabla u_0|^2\ dx$ and $\int_0^T\int_{\Omega} f^2\ dxdt$.\\
On the other hand,
$$b(t)=b(0)+\int_0^tb'(t)\ dt\geq b(0)-\int_0^T|b'(t)|\ dt\geq b(0)-t^{\frac12}\left(\int_0^T|b'(t)|^2\ dt\right)^{\frac12}.$$
Hence, considering $t^*=\frac{b^2(0)}{C_1}$, we have that $b(t)\geq C>0$, for all $t\in[0,t^*]$.\\
Finally,
$$a_{\varepsilon}(u_{\varepsilon})\leq  \frac{1}{\left(\int_{\Omega} u_{\varepsilon}^2\ dx\right)^{|\gamma|}}=\frac{\int_{\Omega}u_{\varepsilon}^2\ dx}{b^2}\leq M<\infty,\text{ for all }t\in[0,t^*].$$
\end{proof}

\begin{lem}\label{maj_ut2}
Let $u_\varepsilon$ be a weak solution of Problem (\ref{prob_aux}) with $\gamma<0$. If $u_0\in H_0^1(\Omega)$ and $f\in L_2(0,T;L_2(\Omega))$, then there exists a $t^*>0$ such that, for $T\leq t^*$,
\begin{equation}\label{est_ut2}
\int_0^T\int_{\Omega}\left(\frac{\partial u_\varepsilon}{\partial t}\right)^2\ dxdt+\int_{\Omega}|\nabla u_\varepsilon|^2\ dx\leq C\int_{\Omega}|\nabla u_0|^2\ dx+C\int_0^T\int_{\Omega} f^2\ dxdt,
\end{equation}
where $C$ does not depend on $\varepsilon$, but may depend on $M$ and $K$.
\end{lem}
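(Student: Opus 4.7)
The plan is to exploit the intermediate inequality (\ref{eq_aux}) that was already derived in the course of proving Lemma \ref{lema_t*2}, and simply feed into it the uniform lower bound on $\int_{\Omega} u_\varepsilon^2\,dx$ (equivalently, the uniform upper bound on $a_\varepsilon$) that Lemma \ref{lema_t*2} provides on the short time interval $[0,t^*]$.

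Concretely, I would first repeat the test-function calculation from Lemma \ref{lema_t*2}: multiply the PDE in (\ref{prob_aux}) by $\partial_t u_\varepsilon$, integrate over $\Omega$, apply Green's formula to $-a_\varepsilon\int_{\Omega}\Delta u_\varepsilon\,\partial_t u_\varepsilon\,dx$ to produce the term $\tfrac12\,\tfrac{d}{dt}\int_{\Omega}|\nabla u_\varepsilon|^2\,dx$ multiplied by $a_\varepsilon$, and then apply Young's inequality to the right-hand side in the weighted form $|f\,\partial_t u_\varepsilon|\le\tfrac{1}{2a_\varepsilon}f^2+\tfrac{a_\varepsilon}{2}(\partial_t u_\varepsilon)^2$ so that the $1/a_\varepsilon$ factor appears cleanly. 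Dividing by $a_\varepsilon$ and integrating in time on $[0,T]$ yields precisely (\ref{eq_aux}), namely
\begin{equation*}
\int_0^T\left(\int_{\Omega} u_\varepsilon^2\,dx\right)^{|\gamma|}\!\int_{\Omega}(\partial_t u_\varepsilon)^2\,dx\,dt+\int_{\Omega}|\nabla u_\varepsilon|^2\,dx\le\int_{\Omega}|\nabla u_0|^2\,dx+C\int_0^T\!\!\int_{\Omega}f^2\,dx\,dt,
\end{equation*}
where the factor $(\int u_\varepsilon^2)^{|\gamma|}$ is exactly $1/a_\varepsilon$ up to the harmless $+\varepsilon$.

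The second step is to convert the weighted time integral on the left-hand side into an unweighted one. Lemma \ref{lema_t*2} has already shown that, under the hypothesis $\int_\Omega u_0\,dx>0$ (which I need to add, or at least implicitly inherit from the setting), there exists $t^*>0$ and a constant $c_0>0$, independent of $\varepsilon$, such that $\int_{\Omega}u_\varepsilon^2\,dx\ge c_0$ on $[0,t^*]$; equivalently, $a_\varepsilon(u_\varepsilon)\le M$ on that interval. Restricting to $T\le t^*$ and using $(\int u_\varepsilon^2)^{|\gamma|}\ge c_0^{|\gamma|}$ pointwise in $t$, the weighted integral dominates $c_0^{|\gamma|}\int_0^T\!\int_{\Omega}(\partial_t u_\varepsilon)^2\,dx\,dt$. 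Dividing through by $c_0^{|\gamma|}$ (or, equivalently, multiplying by $M$) on both sides gives exactly (\ref{est_ut2}) with $C=C(M,K)$ as announced.

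The only delicate point, and the one I expect to be the main obstacle, is justifying that the constants really are $\varepsilon$-independent: one must check that $c_0$ (from Lemma \ref{lema_t*2}) and $M$ both depend only on $\|u_0\|_{H^1_0}$, $\|f\|_{L_2(L_2)}$, $\gamma$, and $K$, not on $\varepsilon$. This is built into Lemma \ref{lema_t*2}, whose derivation of $t^*=b^2(0)/C_1$ uses only the data-dependent constant $C_1$. Once that is recorded, the rest is a straightforward Young/Gronwall-free manipulation, and no nonlinear estimate beyond those already proven is needed.
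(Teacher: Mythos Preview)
Your proposal is correct and follows essentially the same route as the paper: redo the $\partial_t u_\varepsilon$ test-function computation to reach the weighted inequality (\ref{eq_aux}), then invoke the $\varepsilon$-independent bound $a_\varepsilon\le M$ on $[0,t^*]$ from Lemma \ref{lema_t*2} (together with the lower bound $a_\varepsilon\ge (K^2+1)^\gamma$ from Corollary \ref{maj_a} to control the $f$-term) to strip the weight. Your remark that the hypothesis $\int_\Omega u_0\,dx>0$ is implicitly needed here---since $t^*$ comes from Lemma \ref{lema_t*2}---is well taken and is indeed tacit in the paper's statement.
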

\begin{proof}
Multiplying the first equation of Problem (\ref{prob_aux}) by $ \frac{\partial u_\varepsilon}{\partial t}$ and integrating in $\Omega$, we arrive at the equality
$$\int_{\Omega}\left(\frac{\partial u_\varepsilon}{\partial t}\right)^2\ dx-a_\varepsilon\int_{\Omega}\Delta u_\varepsilon \frac{\partial u_\varepsilon}{\partial t}\ dx = \int_{\Omega} f\frac{\partial u_\varepsilon}{\partial t}\ dx.$$
Applying Green's theorem to the second term on the left hand side and Cauchy's inequality to the right hand side, we obtain
$$\frac12\int_{\Omega}\left(\frac{\partial u_\varepsilon}{\partial t}\right)^2\ dx+\frac{a_\varepsilon}{2}\frac{d}{dt}\int_{\Omega}|\nabla u_\varepsilon|^2\ dx\leq \frac12\int_{\Omega} f^2\ dx,$$
which is
$$\frac{1}{a_\varepsilon}\int_{\Omega}\left(\frac{\partial u_\varepsilon}{\partial t}\right)^2\ dx+\frac{d}{dt}\int_{\Omega}|\nabla u_\varepsilon|^2\ dx\leq \frac{1}{a_\varepsilon}\int_{\Omega} f^2\ dx.$$
Integrating in $t$ and using the bounds of $a$,  we conclude that
$$\int_0^T\int_{\Omega}\left(\frac{\partial u_\varepsilon}{\partial t}\right)^2\ dxdt+\int_{\Omega}|\nabla u_\varepsilon|^2\ dx\leq C\int_{\Omega}|\nabla u_0|^2\ dx+C\int_0^T\int_{\Omega} f^2\ dxdt,$$
for $T<t^*$.
\end{proof}

\begin{thm}\label{exist1}
If $\gamma\geq 0$, $u_0\in H_0^1(\Omega)$ and $f\in L_2(0,T;H_0^1(\Omega))$, then Problem (\ref{probi}) has a weak solution, in the sense of Definition \ref{fraca}.
\end{thm}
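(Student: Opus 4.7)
The plan is to construct the weak solution as a limit of the strong solutions $u_\varepsilon$ of the nondegenerate auxiliary problem (\ref{prob_aux}), passing $\varepsilon \to 0^+$. First, I would collect the $\varepsilon$-independent estimates already proved: Lemma \ref{maj2_u} (with $k=1,2$) gives uniform boundedness of $u_\varepsilon$ in $L_\infty(0,T;L_2(\Omega))$; Lemma \ref{maj_ux} combined with the upper bound $a_\varepsilon \leq (K^2+1)^\gamma$ from Corollary \ref{maj_a} gives $u_\varepsilon$ uniformly bounded in $L_\infty(0,T;H_0^1(\Omega))$; Lemma \ref{maj_ut} gives $\partial_t u_\varepsilon$ uniformly bounded in $L_2(0,T;L_2(\Omega))$.

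From these estimates, by the Banach--Alaoglu theorem I can extract a (not relabelled) subsequence such that $u_\varepsilon \rightharpoonup u$ weakly-$*$ in $L_\infty(0,T;H_0^1(\Omega))$, $\nabla u_\varepsilon \rightharpoonup \nabla u$ weakly in $L_2(0,T;L_2(\Omega))$, and $\partial_t u_\varepsilon \rightharpoonup \partial_t u$ weakly in $L_2(0,T;L_2(\Omega))$. Since $H_0^1(\Omega) \hookrightarrow\hookrightarrow L_2(\Omega) \hookrightarrow L_2(\Omega)$ with the first embedding compact, the Aubin--Lions lemma yields $u_\varepsilon \to u$ strongly in $L_2(0,T;L_2(\Omega))$, and so, up to a further subsequence, $u_\varepsilon(\cdot,t) \to u(\cdot,t)$ in $L_2(\Omega)$ for a.e.\ $t \in (0,T)$. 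The limit $u$ inherits the regularity required by (\ref{regx}), and the initial condition passes to the limit via the continuous embedding $\{v : v \in L_2(0,T;H_0^1), v_t \in L_2(0,T;L_2)\} \hookrightarrow C([0,T];L_2(\Omega))$.

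The crucial step is passing to the limit in the nonlocal coefficient. Because $\|u_\varepsilon(\cdot,t)\|_{L_2(\Omega)} \leq K$ by Corollary \ref{maj_a}, the truncation by $K^2$ is inactive and $a_\varepsilon(u_\varepsilon)(t) = \left(\int_\Omega u_\varepsilon^2\,dx + \varepsilon\right)^\gamma$. The a.e.\ convergence of $u_\varepsilon(\cdot,t)$ in $L_2(\Omega)$ gives $\int_\Omega u_\varepsilon^2(\cdot,t)\,dx \to \int_\Omega u^2(\cdot,t)\,dx$ for a.e.\ $t$, and since $x \mapsto (x+\varepsilon)^\gamma$ is continuous for $\gamma \geq 0$, $a_\varepsilon(u_\varepsilon)(t) \to a(u)(t)$ a.e.\ in $t$. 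Together with the uniform bound $0 \leq a_\varepsilon(u_\varepsilon) \leq (K^2+1)^\gamma$, the dominated convergence theorem yields $a_\varepsilon(u_\varepsilon) \to a(u)$ strongly in $L_p(0,T)$ for every $p \in [1,\infty)$.

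To finish, I would write the weak formulation of (\ref{prob_aux}) tested against any $w \in H_0^1(\Omega)$ and integrated against any $\varphi \in C_c^\infty(0,T)$:
\begin{equation*}
\int_0^T \varphi(t) \int_\Omega \partial_t u_\varepsilon\, w\,dx\,dt + \int_0^T \varphi(t)\, a_\varepsilon(u_\varepsilon)(t) \int_\Omega \nabla u_\varepsilon \cdot \nabla w\,dx\,dt = \int_0^T \varphi(t) \int_\Omega f w\,dx\,dt.
\end{equation*}
The first and third terms pass to the limit by the weak convergences above. For the middle term, the map $t \mapsto \int_\Omega \nabla u_\varepsilon(\cdot,t)\cdot \nabla w\,dx$ converges weakly in $L_2(0,T)$, while $\varphi\, a_\varepsilon(u_\varepsilon) \to \varphi\, a(u)$ strongly in $L_2(0,T)$, so the product converges by the standard weak--strong pairing. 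The main obstacle is precisely this passage to the limit in the nonlinear nonlocal term, which is resolved here because $\gamma \geq 0$ ensures that $a_\varepsilon$ is uniformly bounded and continuous in its argument, in contrast to the singular case $\gamma < 0$.
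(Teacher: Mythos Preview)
Your proposal is correct and follows the same strategy as the paper: use the uniform estimates of Lemmas \ref{maj2_u}, \ref{maj_ux} and \ref{maj_ut} to extract weakly convergent subsequences, obtain strong (hence a.e.) convergence of $u_\varepsilon$ via compactness, deduce $a_\varepsilon(u_\varepsilon)\to a(u)$, and pass to the limit in the weak formulation. In fact your write-up is more careful than the paper's own proof, which simply asserts the a.e.\ convergence of $u_\varepsilon$ and of $a_\varepsilon(u_\varepsilon)$ without naming Aubin--Lions or spelling out the weak--strong pairing for the nonlocal term.
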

\begin{proof}
By Lemmas \ref{maj2_u}, \ref{maj_ux} and \ref{maj_ut}, we can conclude that there exists a function $u$ and subsequences such that
\begin{itemize}
\item[] $u_\varepsilon\to u$ weakly in $L_2(\Omega\times]0,T])$,
\item[] $\nabla u_\varepsilon\to \nabla u$ weakly in $L_2(\Omega\times]0,T])$,
\item[] $ \frac{\partial u_\varepsilon}{\partial t}\to \frac{\partial u}{\partial t}$ weakly in $L_2(\Omega\times]0,T])$,
\item[] $u_\varepsilon\to u$ a.e. in $\Omega\times]0,T]$.
\end{itemize}
Since $a$ is continuous, we have that
$$a_\varepsilon(u_\varepsilon)\to a(u)\text{ a.e. in } \Omega\times]0,T].$$
Passing to the limit in
$$\int_{\Omega}\frac{\partial u_\varepsilon}{\partial t}w+a_\varepsilon(u_\varepsilon)\nabla u_\varepsilon\nabla w-fw\ dx=0,$$
we obtain
$$\int_{\Omega}\frac{\partial u}{\partial t}w+a(u)\nabla u\nabla w-fw\ dx=0.$$
Hence $u$ is a weak solution of Problem (\ref{probi}).
\end{proof}

\begin{thm}
If $\gamma<0$, $u_0\in H_0^1(\Omega)$ and $f\in L_2(0,T;H_0^1(\Omega))$, then there exists a $t^*$ such that Problem (\ref{probi}) has a weak solution for $T\leq t^*$, in the sense of Definition \ref{fraca}.
\end{thm}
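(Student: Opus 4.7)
The strategy is to mimic the existence proof of Theorem \ref{exist1}, but restricted to the short time interval guaranteed by the degenerate-case estimates. First I would fix $t^*>0$ as supplied by Lemma \ref{lema_t*2}, so that $a_\varepsilon(u_\varepsilon)\leq M$ uniformly in $\varepsilon$ on $[0,t^*]$, and then work throughout on $[0,T]$ with $T\leq t^*$. Lemmas \ref{maj2_u} and \ref{maj_ux} deliver $\varepsilon$-independent bounds for $u_\varepsilon$ and $\nabla u_\varepsilon$ in $L_\infty(0,T;L_2(\Omega))$, while Lemma \ref{maj_ut2} provides the uniform bound on $\partial u_\varepsilon/\partial t$ in $L_2(0,T;L_2(\Omega))$ that is unavailable in the fully degenerate regime. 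From these estimates I extract, along a subsequence, the same three weak limits $u_\varepsilon\to u$, $\nabla u_\varepsilon\to\nabla u$ and $\partial_t u_\varepsilon\to\partial_t u$ in $L_2(\Omega\times(0,T))$ as in Theorem \ref{exist1}.

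The compact embedding $H_0^1(\Omega)\hookrightarrow L_2(\Omega)$, together with the uniform $L_2$-bound on $\partial_t u_\varepsilon$, yields via the Aubin--Lions lemma strong convergence $u_\varepsilon\to u$ in $L_2(0,T;L_2(\Omega))$, and hence, after a further extraction, $u_\varepsilon\to u$ a.e. on $\Omega\times(0,T)$.

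The delicate point is the passage to the limit in the nonlocal coefficient $a_\varepsilon(u_\varepsilon)=\left(\int_\Omega u_\varepsilon^2\,dx+\varepsilon\right)^\gamma$. The proof of Lemma \ref{lema_t*2} supplies, in addition to the upper bound $a_\varepsilon\leq M$, a uniform lower bound $\int_\Omega u_\varepsilon^2\,dx\geq c>0$ on $[0,t^*]$ (since the functional $b(t)$ stays strictly positive on $[0,t^*]$, independently of $\varepsilon$). Dominated convergence then upgrades the a.e. convergence of $u_\varepsilon$ into pointwise-in-$t$ convergence of $\int_\Omega u_\varepsilon^2\,dx$ to $\int_\Omega u^2\,dx$, and the strict positivity of the limit makes $s\mapsto s^\gamma$ continuous there, so $a_\varepsilon(u_\varepsilon)\to a(u)$ a.e. in $t$. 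Combined with the uniform bound and the weak convergence of $\nabla u_\varepsilon$ in $L_2(\Omega\times(0,T))$, this suffices to pass to the limit in the time-integrated weak formulation tested against separable functions $\varphi(t)w(x)$, yielding (\ref{fracav}) for $u$. The initial condition (\ref{condi}) is inherited from the $H^1$-regularity of $u_\varepsilon$ and $u_\varepsilon(\cdot,0)=u_0$, exactly as in Theorem \ref{exist1}.

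I expect the main obstacle to be precisely that passage to the limit in $a_\varepsilon(u_\varepsilon)\nabla u_\varepsilon$: this is a weak-times-a.e. product that works here only because $a_\varepsilon(u_\varepsilon)$ is a function of $t$ alone, and because the lower bound on $\int_\Omega u_\varepsilon^2\,dx$ extracted from Lemma \ref{lema_t*2} prevents the singular factor $s^\gamma$ (with $\gamma<0$) from blowing up in the limit $\varepsilon\to 0$. The restriction $T\leq t^*$ is intrinsic to the argument: beyond $t^*$ one loses control of the strict positivity of $\int_\Omega u_\varepsilon^2\,dx$, and a(u) could become singular.
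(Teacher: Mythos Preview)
Your proposal is correct and follows the paper's approach exactly: the paper's proof is a one-line reference to Lemmas \ref{maj2_u}, \ref{maj_ux}, \ref{maj_ut2} together with the compactness-and-limit argument of Theorem \ref{exist1}, which is precisely what you spell out. Your additional care in justifying the passage to the limit in $a_\varepsilon(u_\varepsilon)\nabla u_\varepsilon$---invoking Aubin--Lions and the uniform lower bound on $\int_\Omega u_\varepsilon^2\,dx$ from the proof of Lemma \ref{lema_t*2}---only makes explicit what the paper leaves implicit.
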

\begin{proof}
By Lemmas \ref{maj2_u}, \ref{maj_ux}, \ref{maj_ut2} and using the arguments of Theorem \ref{exist1}. the result follows easily.
\end{proof}

\section{Uniqueness of a weak solution}
In order to prove the uniqueness of a weak solution of Problem (\ref{probi}), we need to obtain positive real bounds for the diffusive term, and the Lipschitz-continuity in $u$.
\begin{lem}\label{lema_t*}
If $\gamma\geq0$,  $\int_{\Omega} u_0\ dx>0$ and the conditions of Lemma \ref{maj_ut} are fulfilled, then there exists a $t^*>0$ such that $a(u)\geq m>0$ for $t\in[0,t^*]$, with $u$ a weak solution of Problem (\ref{probi}).
\end{lem}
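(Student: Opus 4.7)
The plan is to mimic the strategy of Lemma \ref{lema_t*2}, but now exploit the much stronger energy estimate available for $\gamma\geq 0$, namely the $\varepsilon$-uniform bound $\int_0^T\!\!\int_\Omega (\partial_t u_\varepsilon)^2\,dxdt\leq C$ from Lemma \ref{maj_ut}, which passes to $u$ in the weak limit obtained in Theorem \ref{exist1}. The non-local coefficient $a(u)=\left(\int_\Omega u^2\,dx\right)^\gamma$ is bounded below by a positive constant exactly when $\|u(t)\|_{L_2(\Omega)}$ stays away from zero, so the task reduces to proving a short-time lower bound on $\|u\|_{L_2(\Omega)}$.

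First I would dispose of the trivial case $\gamma=0$, in which $a(u)\equiv 1$. For $\gamma>0$, the hypothesis $\int_\Omega u_0\,dx>0$ together with the Cauchy--Schwarz inequality applied to $u_0\cdot 1$ gives
$$\|u_0\|_{L_2(\Omega)}\geq |\Omega|^{-1/2}\int_\Omega u_0\,dx>0,$$
so the functional $b(t):=\|u(t)\|_{L_2(\Omega)}$ is strictly positive at $t=0$.

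The heart of the argument is to bound $b'$ in $L_2(0,T)$. Since $u\in L_2(0,T;H^1_0(\Omega))$ and $u_t\in L_2(0,T;L_2(\Omega))$, the Lions chain rule yields that $t\mapsto\int_\Omega u^2\,dx$ is absolutely continuous with derivative $2\int_\Omega u\,u_t\,dx$; consequently, wherever $b>0$, Cauchy--Schwarz gives $|b'(t)|\leq \|u_t(t)\|_{L_2(\Omega)}$. Combining this with Lemma \ref{maj_ut} (passed to the weak limit as in Theorem \ref{exist1}), one obtains
$$\int_0^T|b'(t)|^2\,dt\leq \int_0^T\!\!\int_\Omega u_t^2\,dxdt\leq C_1$$
with $C_1$ depending only on the data. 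Then, exactly as in Lemma \ref{lema_t*2},
$$b(t)\geq b(0)-t^{1/2}\Bigl(\int_0^t|b'(s)|^2\,ds\Bigr)^{1/2}\geq b(0)-C_1^{1/2}\,t^{1/2},$$
so the choice $t^*:=b(0)^2/(4C_1)>0$ yields $b(t)\geq b(0)/2$ on $[0,t^*]$, and hence
$$a(u)(t)=b(t)^{2\gamma}\geq\bigl(b(0)/2\bigr)^{2\gamma}=:m>0\quad\text{on }[0,t^*].$$

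The only delicate step I foresee is legitimising the chain rule and the differentiation of $b$ on the (a priori only open) set where $b>0$. The cleanest way to avoid this technicality is to run the entire computation at the level of the regular approximations $u_\varepsilon$, where $u_\varepsilon$, $\partial_t u_\varepsilon$ are smooth and the uniform bound of Lemma \ref{maj_ut} applies directly; this produces an $\varepsilon$-uniform lower bound for $\|u_\varepsilon(t)\|_{L_2(\Omega)}$ on $[0,t^*]$, which then transfers to $u$ via the a.e. (hence $L_2$) convergence $u_\varepsilon\to u$ extracted in the proof of Theorem \ref{exist1}.
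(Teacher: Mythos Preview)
Your argument is correct. Both you and the paper rely on the same key ingredient---the bound on $\int_0^T\!\int_\Omega u_t^2\,dx\,dt$ from Lemma~\ref{maj_ut}---but you track different quantities. The paper tracks the \emph{linear} functional $\int_\Omega u\,dx$: since $\frac{d}{dt}\int_\Omega u\,dx=\int_\Omega u_t\,dx$, Cauchy--Schwarz in space and then in time yields
\[
\int_\Omega u\,dx\ \geq\ \int_\Omega u_0\,dx - C\,|\Omega|^{1/2}\,t^{1/2},
\]
which stays positive for $t<t^*$; the lower bound on $a(u)$ then follows from $\int_\Omega u^2\,dx\geq|\Omega|^{-1}\bigl(\int_\Omega u\,dx\bigr)^2$. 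You instead track $b(t)=\|u(t)\|_{L_2(\Omega)}$ directly, exactly as in Lemma~\ref{lema_t*2}. The paper's route is slightly slicker in that differentiating a linear functional avoids the chain rule for $\sqrt{\cdot}$ and the accompanying need to stay on the set $\{b>0\}$; your route, on the other hand, makes the constant $m=(b(0)/2)^{2\gamma}$ explicit and mirrors the $\gamma<0$ argument more closely. Either way the proof goes through.
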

\begin{proof}
By Lemma \ref{maj_ut}, we have that
\begin{eqnarray*}
\int_{\Omega}u\ dx&=&\int_{\Omega}u_0\ dx+\int_0^t\frac{d}{dt}\int_{\Omega} u\ dxdt=\int_{\Omega} u_0\ dx+\int_0^t\int_{\Omega}\frac{\partial u}{\partial t}\ dxdt\\
&\geq&\int_{\Omega}u_0\ dx-t^{\frac12}|\Omega|^{\frac12}\left(\int_0^t\int_{\Omega}\left(\frac{\partial u}{\partial t}\right)^2\ dxdt\right)^{\frac12}\\
&\geq&\int_{\Omega}u_0\ dx-t^{\frac12}|\Omega|^{\frac12}C,\\
\end{eqnarray*}
with $C$ depending on $\int_{\Omega}|\nabla u_0|\ dx$, $\int_0^t\int_{\Omega} f^2\ dxdt$ and $\int_0^t\int_{\Omega} |\nabla f|^2\ dxdt$.
Thus, if $t^*$ satisfies
$$t^*<\frac{\left(\int_{\Omega} u_0\ dx\right)^2}{C|\Omega|},$$
then the solution of Problem (\ref{probi}) is such that
$$\int_{\Omega}u\ dx> 0.$$
So,
$$\left(\int_{\Omega}u^2\ dx\right)^{\gamma}> 0,\quad t<t^*.$$
\end{proof}

Before proving the uniqueness of the weak solution, we present the following lemma, which proves the Lipschitz-continuity of the diffusion term.
\begin{lem}\label{Lip_a}
If
$$0<m \leq \int_{\Omega} v^2\ dx, \int_{\Omega} w^2\ dx\leq M<\infty,$$
then
$$|a(v)-a(w)|\leq C \|v-w\|,$$
where $C$ may depend on $\gamma$ , $m$ and $M$.
\end{lem}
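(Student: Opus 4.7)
The plan is to factor the difference $a(v)-a(w)$ through the scalar map $g(s)=s^{\gamma}$, exploiting that the lower bound $m>0$ keeps us away from the singularity (or zero) of $g$ and that the upper bound $M$ keeps us in a bounded interval. Writing $a(v)=g\!\left(\int_{\Omega}v^{2}\,dx\right)$ and $a(w)=g\!\left(\int_{\Omega}w^{2}\,dx\right)$, I would first control the outer piece and then the inner piece separately.

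First I would apply the mean value theorem to $g$ on the compact interval $[m,M]$, where $g$ is $C^{1}$ and $|g'(s)|=|\gamma|\,s^{\gamma-1}$ is bounded by
\[
L_{\gamma}:=|\gamma|\,\max\!\bigl(m^{\gamma-1},\,M^{\gamma-1}\bigr).
\]
Since $\int_{\Omega}v^{2}\,dx$ and $\int_{\Omega}w^{2}\,dx$ both lie in $[m,M]$ by hypothesis, this yields
\[
|a(v)-a(w)|\le L_{\gamma}\left|\int_{\Omega}v^{2}\,dx-\int_{\Omega}w^{2}\,dx\right|.
\]

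Next I would handle the inner difference by the factorisation $v^{2}-w^{2}=(v-w)(v+w)$ and Cauchy--Schwarz:
\[
\left|\int_{\Omega}(v^{2}-w^{2})\,dx\right|\le \|v-w\|_{L_{2}(\Omega)}\,\|v+w\|_{L_{2}(\Omega)}\le \bigl(\|v\|_{L_{2}}+\|w\|_{L_{2}}\bigr)\|v-w\|_{L_{2}(\Omega)}\le 2\sqrt{M}\,\|v-w\|_{L_{2}(\Omega)}.
\]
Combining the two estimates gives exactly $|a(v)-a(w)|\le C\,\|v-w\|_{L_{2}(\Omega)}$ with $C=2L_{\gamma}\sqrt{M}$, which depends only on $\gamma$, $m$, $M$, as required.

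There is no real obstacle here; the only delicate point is making sure both $s^{\gamma-1}$ endpoints are used (the sign of $\gamma-1$ determines whether $g'$ is maximised at $m$ or at $M$, which is why the lower bound $m>0$ is essential, particularly in the singular case $\gamma<0$). The hypothesis $m\le \int_{\Omega}v^{2}\,dx$ is exactly what prevents the constant from blowing up, so in the uniqueness argument this lemma will have to be combined with a result such as Lemma~\ref{lema_t*} (or its $\gamma<0$ analogue via Lemma~\ref{lema_t*2}) that guarantees the two-sided bound on $\|u\|_{L_{2}(\Omega)}^{2}$ along the weak solution.
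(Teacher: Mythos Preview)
Your proof is correct and follows essentially the same approach as the paper: the paper writes $p^{\gamma}-q^{\gamma}=\gamma\int_{q}^{p}\tau^{\gamma-1}\,d\tau$ (equivalent to your mean value theorem step), splits into the cases $\gamma\ge 1$ and $\gamma<1$ to bound $\tau^{\gamma-1}$ by $M^{\gamma-1}$ or $m^{\gamma-1}$ respectively (which is exactly your $L_{\gamma}=|\gamma|\max(m^{\gamma-1},M^{\gamma-1})$), and then controls $|p-q|$ via the same factorisation $v^{2}-w^{2}=(v-w)(v+w)$ and Cauchy--Schwarz. Your version is slightly more explicit about the final constant $2L_{\gamma}\sqrt{M}$, but there is no substantive difference.
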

\begin{proof}
Denoting $p=\int_{\Omega} v^2\ dx$ and $q=\int_{\Omega} w^2\ dx$, we have
$$p^{\gamma}-q^{\gamma}=\gamma\int^p_q \tau^{\gamma-1}\ d\tau.$$
If $\gamma\geq 1$, then $\tau^{\gamma-1}\leq M^{\gamma-1}$ and $|p^{\gamma}-q^{\gamma}|\leq\gamma M^{\gamma-1}|p-q|.$\\
If $\gamma<1$, then $\tau^{\gamma-1}\leq m^{\gamma-1}$ and $|p^{\gamma}-q^{\gamma}|\leq|\gamma| m^{\gamma-1}|p-q|,$ where
\begin{eqnarray*}
|p-q|&\leq& \int_{\Omega}|v^2-w^2|\ dx=\int_{\Omega}|v-w|\ |v+w|\ dx\\
 &\leq& \left(\int_{\Omega}|v-w|^2\ dx\right)^{\frac12}\left(\int_{\Omega}|v+w|^2\ dx\right)^{\frac12}\\
 &\leq& C\left(\int_{\Omega}|v-w|^2\ dx\right)^{\frac12}
\end{eqnarray*}
and this completes the proof.
\end{proof}

\begin{thm}
If $u_0\in H_0^1(\Omega)$, $f\in L_2(0,T;H_0^1(\Omega))$ and $\int_{\Omega} u_0\ dx>0$, then there exists a $t^*>0$ such that Problem (\ref{probi}) has a unique weak solution, in the sense of Definition \ref{fraca}, for $t\leq t^*$.
\end{thm}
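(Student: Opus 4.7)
The plan splits naturally into two halves: first producing a weak solution on some $[0,t^*]$, then proving it is unique there.

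\textbf{Existence.} This is essentially free from what has already been proved. When $\gamma\geq 0$, Theorem \ref{exist1} delivers a weak solution on all of $[0,T]$. When $\gamma<0$, the theorem immediately preceding the present one delivers a weak solution on some $[0,t^*_1]$; its hypothesis $f\in L_2(0,T;L_2(\Omega))$ is implied by the stronger assumption $f\in L_2(0,T;H_0^1(\Omega))$, so nothing is lost.

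\textbf{Preparing for uniqueness: two-sided bound on $a(u)$.} The central preparatory step is to shrink $t^*$ so that every weak solution $u$ of (\ref{probi}) satisfies
\begin{equation*}
0 < m \leq \int_\Omega u^2(x,t)\,dx \leq M < \infty, \qquad t\in[0,t^*].
\end{equation*}
The upper bound comes from passing Lemma \ref{maj2_u} to the $\varepsilon$-limit. The lower bound invokes the hypothesis $\int_\Omega u_0\,dx > 0$: if $\gamma\geq 0$, Lemma \ref{lema_t*} keeps $\int_\Omega u\,dx \geq c > 0$ on a short interval, and Cauchy--Schwarz upgrades this to $\int_\Omega u^2 \geq c^2/|\Omega|$; if $\gamma<0$, Lemma \ref{lema_t*2} gives $a_\varepsilon(u_\varepsilon)\leq M$, which (since $\gamma<0$) inverts to a uniform lower bound on $\int_\Omega u_\varepsilon^2 + \varepsilon$ that survives the limit. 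In either regime, the hypotheses of Lemma \ref{Lip_a} are now met on $[0,t^*]$.

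\textbf{Uniqueness via energy estimate.} Let $u_1,u_2$ be two weak solutions on $[0,t^*]$ with common data, and set $w=u_1-u_2$. Rewriting $a(u_1)\nabla u_1-a(u_2)\nabla u_2 = a(u_1)\nabla w + (a(u_1)-a(u_2))\nabla u_2$ and testing the difference of the two weak formulations (\ref{fracav}) with $w\in H_0^1(\Omega)$ yields
\begin{equation*}
\frac{1}{2}\frac{d}{dt}\|w\|_{L_2(\Omega)}^2 + a(u_1)\int_\Omega|\nabla w|^2\,dx = -\bigl(a(u_1)-a(u_2)\bigr)\int_\Omega \nabla u_2\cdot\nabla w\,dx.
\end{equation*}
Lemma \ref{Lip_a} gives $|a(u_1)-a(u_2)|\leq C\|w\|_{L_2(\Omega)}$, and H\"older followed by Young's inequality (with the resulting $\|\nabla w\|^2$ absorbed into $a(u_1)\|\nabla w\|^2\geq m\|\nabla w\|^2$ on the left) reduces everything to
\begin{equation*}
\frac{d}{dt}\|w\|_{L_2(\Omega)}^2 \leq \frac{C^2}{m}\|\nabla u_2(\cdot,t)\|_{L_2(\Omega)}^2\,\|w\|_{L_2(\Omega)}^2.
\end{equation*}
Since Lemma \ref{maj_ux} (passed to the limit) gives $\|\nabla u_2(\cdot,t)\|_{L_2(\Omega)}^2$ bounded on $[0,t^*]$, Gronwall's lemma combined with $w(\cdot,0)=0$ forces $w\equiv 0$.

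\textbf{Main obstacle.} The delicate step is synchronising the various candidates for $t^*$ --- the existence lifespan when $\gamma<0$, the positivity window of Lemma \ref{lema_t*}, and the finiteness window of Lemma \ref{lema_t*2} --- and verifying that the two-sided bound on $\int_\Omega u^2\,dx$ holds \emph{simultaneously} for both solutions $u_1,u_2$ on a common interval. I would take $t^*$ to be the minimum of the relevant times and carefully check that each $\varepsilon$-uniform bound genuinely transfers to the weak limits $u_1,u_2$, since it is only at that stage that Lemma \ref{Lip_a} can legitimately be invoked.
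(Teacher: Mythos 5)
Your proposal is correct and follows essentially the same route as the paper: decompose $a(u_1)\nabla u_1-a(u_2)\nabla u_2=a(u_1)\nabla(u_1-u_2)+(a(u_1)-a(u_2))\nabla u_2$, test with the difference, use the lower bound $a(u_1)\ge m>0$ on $[0,t^*]$ from Lemma \ref{lema_t*} together with the Lipschitz estimate of Lemma \ref{Lip_a} and the gradient bound of Lemma \ref{maj_ux}, and conclude by Gronwall. Your extra care about establishing the two-sided bound on $\int_\Omega u^2\,dx$ in both sign regimes of $\gamma$ and about synchronising the various $t^*$'s is in fact slightly more thorough than the paper, which handles these points only implicitly.
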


\begin{proof}
Suppose that there exist two solutions $u_1$ and $u_2$. Let $u=u_1-u_2$, then
\begin{equation*}
\int_{\Omega}\frac{\partial u}{\partial t}w\ dx+\int_{\Omega}(a(u_1)\nabla u_1-a(u_2)\nabla u_2)\cdot\nabla w\ dx=0
\end{equation*}
or, equivalently,
\begin{equation*}
\int_{\Omega}\frac{\partial u}{\partial t}w\ dx+\int_{\Omega}a(u_1)\nabla u\cdot\nabla w\ dx=\int_{\Omega}(a(u_2)-a(u_1))\nabla u_2\cdot\nabla w\ dx.
\end{equation*}
Making $w=u$, we obtain
\begin{equation*}
\frac12\frac{d}{dt}\int_{\Omega}u^2\ dx+a(u_1)\int_{\Omega}|\nabla u|^2\ dx=(a(u_2)-a(u_1))\int_{\Omega}\nabla u_2\cdot\nabla u\ dx.
\end{equation*}
Then
\begin{equation*}
\frac12\frac{d}{dt}\int_{\Omega}u^2\ dx+a(u_1)\int_{\Omega}|\nabla u|^2\ dx\leq |a(u_2)-a(u_1)|\int_{\Omega}|\nabla u_2\cdot\nabla u|\ dx.
\end{equation*}
By Lemma \ref{lema_t*} and Corollary \ref{maj_a}, there exists $t^*>0$ such that $a(u_1)\geq m>0$, for $t\in[0,t^*]$. So we have that
\begin{equation*}
\frac{d}{dt}\int_{\Omega}u^2\ dx+m\int_{\Omega}|\nabla u|^2\ dx\leq \frac{1}{4m}|a(u_2)-a(u_1)|^2\int_{\Omega}|\nabla u_2|^2\ dx+m\int_{\Omega}|\nabla u|^2\ dx.
\end{equation*}
In the last section, we proved that
$$\int_{\Omega}|\nabla u_2|^2\ dx\leq C,$$
and, in Lemma \ref{Lip_a}, we proved that
$$|a(u_2)-a(u_1)|^2\leq \|u_2-u_1\|^2=\int_{\Omega} u^2\ dx.$$
Thus
\begin{equation*}
\frac{d}{dt}\int_{\Omega}u^2\ dx\leq C\int_{\Omega}u^2\ dx,
\end{equation*}
and, since $u(x,0)=0$, $u(x,t)=0$ for $t\leq t^*$.
\end{proof}

\begin{nota}
If we substitute the Dirichlet condition $u(x,t)=0$, by the Newman condition $\pd{u}{n}=0$, in the boundary, then the solution of this new  problem satisfies
$$\int_{\Omega} u\ dx=\int_{\Omega} u_0\ dx+\int_0^t\int_{\Omega}f\ dxdt =g(t),$$
where $g(t)$ is defined by the initial data. Using the inequality
$$\left|\int_{\Omega} u\ dx\right|\leq C(\Omega)\left(\int_{\Omega}u^2\ dx\right)^{\frac12},$$
we can prove that
$$a(u)=\left(\int_{\Omega}u^2\ dx\right)^{\gamma}\geq \left(\frac{1}{C(\Omega)}\int_{\Omega} u\ dx\right)^{2\gamma}=\left(\frac{g(t)}{C(\Omega)}\right)^{2\gamma},\text{ for }\gamma>0,$$
and
$$a(u)=\left(\int_{\Omega}u^2\ dx\right)^{\gamma}\leq \left(\frac{1}{C(\Omega)}\int_{\Omega} u\ dx\right)^{2\gamma}=\left(\frac{g(t)}{C(\Omega)}\right)^{2\gamma},\text{ for }\gamma<0.$$
Supposing that $u_0$ and $f$ are such that $g(t)>0$ for $t>0$, then the existence and uniqueness could be proved in $[0,T]$, for $T>0$.

\end{nota}

\section{Asymptotic behaviour}
The weak solutions of Problem (\ref{probi}) exhibit different behaviours for different values of $\gamma$. If $\gamma>0$, then we have a decay of the energy of the solution. If $\gamma<0$, then the solution vanishes in a finite time.
\begin{thm}
If $f=0$ and $\gamma>0$, then the weak solution $u$ of Problem (\ref{probi}) satisfies
$$\int_{\Omega} u^2\ dx\leq \frac{\int_{\Omega} u_0^2\ dx}{\left(1+2C_2\gamma \left(\int_{\Omega} u_0^2\ dx\right)^{\gamma}\ t\right)^{\frac{1}{\gamma}}},$$
where $C_2$ is the Poincaré constant.
\end{thm}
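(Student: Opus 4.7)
The plan is to test the equation against $u$ itself, reduce the energy identity to a single ordinary differential inequality for $y(t):=\int_\Omega u^2\,dx$, and then integrate this Bernoulli-type inequality explicitly.

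First I would take $w=u$ in the weak formulation (\ref{fracav}) with $f=0$, which yields
$$\frac{1}{2}\frac{d}{dt}\int_\Omega u^2\,dx+\left(\int_\Omega u^2\,dx\right)^{\gamma}\int_\Omega|\nabla u|^2\,dx=0.$$
Next, I would apply the Poincaré inequality in the form $\int_\Omega|\nabla u|^2\,dx\ge C_2\int_\Omega u^2\,dx$ (with $C_2$ the constant appearing in the statement). This is the crucial step that closes the system: every spatial integral on the left can now be expressed in terms of $y(t)$ alone, giving
$$y'(t)+2C_2\,y(t)^{\gamma+1}\le 0.$$

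The remaining work is a standard ODE calculation. Dividing by $y^{\gamma+1}$ (which is legitimate as long as $y>0$; the case $y_0=0$ is trivial and the case when $y$ hits zero would only strengthen the bound), the inequality becomes $\frac{d}{dt}\!\left(-\frac{1}{\gamma}y^{-\gamma}\right)\le -2C_2$. Integrating from $0$ to $t$ and rearranging yields
$$y(t)^{-\gamma}\ge y(0)^{-\gamma}+2C_2\gamma\, t,$$
whence $y(t)\le y(0)\bigl(1+2C_2\gamma\,y(0)^{\gamma}t\bigr)^{-1/\gamma}$, which is precisely the claimed estimate.

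There is no real obstacle here: the proof reduces to an energy identity plus a scalar ODE. The only point requiring mild care is justifying that $y>0$ so that the division by $y^{\gamma+1}$ is valid; if $y$ vanishes at some time the inequality is trivially preserved (since the right-hand side of the claimed bound remains strictly positive for all finite $t$, while $y$ would stay at $0$ afterwards), and if $y(0)=0$ both sides of the claimed inequality equal $0$.
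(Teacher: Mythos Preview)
Your proof is correct and follows essentially the same route as the paper: test the equation with $u$, apply Poincar\'e to obtain $y'+2C_2y^{\gamma+1}\le0$ for $y=\int_\Omega u^2\,dx$, and integrate the resulting Bernoulli inequality. Your write-up is slightly more detailed in handling the ODE step and the degenerate case $y=0$, but the argument is the same.
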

\begin{proof}
Multiplying the first equation of Problem (\ref{probi}) by $u$ and integrating in $\Omega$, we arrive at
$$\frac12\frac{d}{dt}\int_{\Omega} u^2\ dx+\left(\int_{\Omega} u^2\ dx\right)^{\gamma}\int_{\Omega}|\nabla u|^2\ dx =0.$$
By the Poincaré inequality, we prove that
$$\frac12\frac{d}{dt}\int_{\Omega} u^2\ dx+C_2\left(\int_{\Omega}u^2\ dx\right)^{\gamma+1} \leq 0.$$
Hence $y=\int_{\Omega} u^2\ dx$ satisfies the differential inequality
$$y'+2C_2y^{\gamma+1}\leq 0.$$
Solving this inequality, we obtain
$$y\leq \frac{y_0}{\left(1+2\gamma C_2y_0^{\gamma}t\right)^{\frac{1}{\gamma}}},$$
which is the desired estimate.
\end{proof}
Let us define the positive part of a function $h$ as
$$[h]_+=\left\{\begin{array}{ll}
h,&h\geq 0\\
0,&h<0
\end{array}\right..$$

\begin{thm}
If $f=0$ and $\gamma<0$, then the weak solution $u$ of Problem (\ref{probi}) satisfies
$$\int_{\Omega} u^2\ dx\leq \left[\left(\int_{\Omega} u_0^2\ dx\right)^{|\gamma|}-2|\gamma|C_2\ t\right]_+^{\frac{1}{|\gamma|}},$$
where $C_2$ is the Poincaré constant.
\end{thm}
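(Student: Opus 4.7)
The plan is to imitate the proof of the preceding theorem, but integrate the resulting ODE in the regime where the exponent is below one so that finite-time extinction falls out naturally. First I would take $w=u$ in the weak formulation (\ref{fracav}) with $f=0$ and obtain the energy identity
$$\frac{1}{2}\frac{d}{dt}\int_{\Omega}u^{2}\,dx+\left(\int_{\Omega}u^{2}\,dx\right)^{\gamma}\int_{\Omega}|\nabla u|^{2}\,dx=0.$$
Then I would apply the Poincar\'e inequality $\int_{\Omega}|\nabla u|^{2}\,dx\geq C_{2}\int_{\Omega}u^{2}\,dx$ to absorb the gradient term. Setting $y(t)=\int_{\Omega}u^{2}\,dx$ and writing $\gamma=-|\gamma|$, the differential inequality becomes $y'+2C_{2}y^{1-|\gamma|}\leq 0$.

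Next I would multiply through by $y^{|\gamma|-1}$ on the set $\{t:y(t)>0\}$, which converts the inequality into the linear form
$$\frac{1}{|\gamma|}\frac{d}{dt}y^{|\gamma|}+2C_{2}\leq 0,\qquad\text{i.e.}\qquad\frac{d}{dt}y^{|\gamma|}\leq-2|\gamma|C_{2}.$$
Integrating in $t$ gives $y^{|\gamma|}(t)\leq y^{|\gamma|}(0)-2|\gamma|C_{2}t$ for as long as the right-hand side remains positive, that is, for $t\leq t^{\ast}:=y_{0}^{|\gamma|}/(2|\gamma|C_{2})$. Taking the $1/|\gamma|$ power and observing that at $t=t^{\ast}$ the bound is $0$, the estimate of the theorem is already established on $[0,t^{\ast}]$.

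To finish, one must explain why $y(t)\equiv 0$ for $t\geq t^{\ast}$, which is exactly what the positive-part notation $[\cdot]_{+}$ records. Since $y$ is non-negative and continuous in $t$ (by the regularity in Definition \ref{fraca}) and $y(t^{\ast})=0$, the trivial function $u\equiv 0$ is a weak solution of Problem (\ref{probi}) restarted from time $t^{\ast}$ with zero initial data; by uniqueness on any later small interval (the previous section) the solution stays identically zero, so $y(t)=0$ for $t\geq t^{\ast}$. Combining both cases yields the stated inequality with the positive part.

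I expect the main delicate point to be the manipulation at step three: $a(u)$ is singular when $\int_\Omega u^{2}\,dx$ vanishes (since $\gamma<0$), so the multiplication by $y^{|\gamma|-1}$ is only legitimate where $y>0$, and one should argue on a connected component $\{y>0\}$ of $[0,T]$, obtain the bound there, and then glue with the trivial extension past extinction via the uniqueness result. A clean way to avoid differentiating $y^{|\gamma|}$ directly is to integrate the original inequality after noting that $\frac{d}{dt}y^{|\gamma|}=|\gamma|y^{|\gamma|-1}y'$ a.e.\ on $\{y>0\}$, which is justified because $y\in W^{1,1}_{\mathrm{loc}}$ thanks to $u_t\in L_2(0,T;L_2(\Omega))$.
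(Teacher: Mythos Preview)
Your argument is correct and matches the paper's proof step for step: test with $u$, apply Poincar\'e, and integrate the differential inequality $y'+2C_2y^{\gamma+1}\leq 0$; the paper simply asserts the solution of this inequality without writing out your intermediate manipulation $\frac{d}{dt}y^{|\gamma|}\leq -2|\gamma|C_2$. Your closing discussion of the post-extinction regime goes beyond what the paper does (it says nothing about continuation past $t^\ast$) and is in fact delicate---the uniqueness theorem of the preceding section assumes $\int_\Omega u_0\,dx>0$, and for $\gamma<0$ the coefficient $a(u)$ is singular at $u\equiv 0$---but none of that is needed for the stated inequality, which is only asserted on the interval where the weak solution exists.
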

\begin{proof}
Multiplying the first equation of Problem (\ref{probi}) by $u$ and integrating in $\Omega$, we arrive at
$$\frac12\frac{d}{dt}\int_{\Omega} u^2\ dx+\left(\int_{\Omega} u^2\ dx\right)^{\gamma}\int_{\Omega}|\nabla u|^2\ dx =0.$$
By the Poincaré inequality, we prove that
$$\frac12\frac{d}{dt}\int_{\Omega} u^2\ dx+C_2\left(\int_{\Omega}u^2\ dx\right)^{\gamma+1} \leq 0.$$
Hence $y(t)=\int_{\Omega} u^2\ dx$ satisfies the differential inequality
$$y'+2C_2y^{\gamma+1}\leq 0.$$
Solving this inequality, we obtain
$$y\leq \left[y_0^{|\gamma|}-2|\gamma|C_2\ t\right]_+^{\frac{1}{|\gamma|}},$$
which is the desired estimate.
\end{proof}

\begin{thm}\label{decay}
If
$$\|f\|_{L_2(\Omega)}\leq \frac{A}{(1+Bt)^{\frac{2\gamma+1}{2\gamma}}},$$
with
\begin{equation}\label{eqAB}
A<C_2\left(\int_{\Omega} u^2_0\ dx\right)^{\frac{2\gamma+1}{2}},\quad B=2\gamma C_2\left(\int_{\Omega} u^2_0\ dx\right)^{\gamma}-\frac{2\gamma A}{\left(\int_{\Omega} u^2_0\ dx\right)^{\frac12}}
\end{equation}
and $C_2$ the Poincaré constant, then the weak solution $u$ of Problem (\ref{probi}) with $\gamma>0$, satisfies
$$\int_{\Omega} u^2\ dx\leq \frac{\int_{\Omega} u^2_0\ dx}{(1+Bt)^{\frac{1}{\gamma}}}.$$
\end{thm}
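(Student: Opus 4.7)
The plan is to parallel the two preceding decay theorems but to carry the source term all the way through the energy estimate. Multiplying the first equation of (\ref{probi}) by $u$, integrating over $\Omega$, bounding $\int_\Omega f u\,dx$ by Cauchy--Schwarz, and applying the Poincar\'e inequality, I get, with $y(t) := \int_\Omega u^2\,dx$ and $y_0 := \int_\Omega u_0^2\,dx$, the scalar differential inequality
$$y' + 2 C_2\, y^{\gamma+1} \;\le\; 2\|f\|_{L_2(\Omega)}\, y^{1/2} \;\le\; \frac{2A\,y^{1/2}}{(1+Bt)^{(2\gamma+1)/(2\gamma)}},$$
with initial value $y(0) = y_0$, thereby reducing the PDE problem to an ODE problem.

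The key step is to recognise that the conjectured bound $z(t) := y_0(1+Bt)^{-1/\gamma}$ is actually an exact solution of this ODE with equality. A direct differentiation gives $z' = -(B y_0/\gamma)(1+Bt)^{-(\gamma+1)/\gamma}$, while $z^{\gamma+1} = y_0^{\gamma+1}(1+Bt)^{-(\gamma+1)/\gamma}$ and $(1+Bt)^{-(2\gamma+1)/(2\gamma)} z^{1/2} = y_0^{1/2}(1+Bt)^{-(\gamma+1)/\gamma}$. Cancelling the common time factor in
$$z' + 2C_2\, z^{\gamma+1} = \frac{2A\, z^{1/2}}{(1+Bt)^{(2\gamma+1)/(2\gamma)}}$$
reduces the identity to $2C_2 y_0^{\gamma+1} - 2A y_0^{1/2} = B y_0/\gamma$, which is exactly the definition of $B$ in (\ref{eqAB}). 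The smallness hypothesis $A < C_2 y_0^{(2\gamma+1)/2}$ is then precisely what forces $B > 0$, so the candidate barrier is strictly positive on $[0,\infty)$ and has the right initial value.

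The final ingredient is a scalar ODE comparison: $y$ is a subsolution, $z$ is a solution, and $y(0) = z(0) = y_0$. The only technical obstacle I expect is that the right-hand side is not Lipschitz in $y$ at the origin, but this is harmless because $z(t)>0$ for all $t \ge 0$ and we only need to rule out $y > z$. On the set $\{y>z\}$ convexity gives $y^{\gamma+1} - z^{\gamma+1} \ge (\gamma+1)\, z^{\gamma}(y-z)$, and concavity of $\sqrt{\cdot}$ gives $y^{1/2} - z^{1/2} \le (y-z)/(2\, z^{1/2})$; subtracting the subsolution inequality from the equation for $z$ and testing with the positive part yields
$$\bigl((y-z)_+\bigr)' \;\le\; C(t)\,(y-z)_+, \qquad C(t) = \frac{A\, y_0^{-1/2} - 2 C_2(\gamma+1)\, y_0^{\gamma}}{1+Bt} \in L^1_{\mathrm{loc}},$$
so Gronwall's lemma combined with $(y-z)_+(0)=0$ forces $(y-z)_+\equiv 0$, which is precisely the stated estimate.
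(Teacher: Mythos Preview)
Your argument is correct and follows essentially the same route as the paper: derive the scalar energy inequality, verify that the conjectured profile is an exact solution of the limiting ODE (which is precisely how the constant $B$ is engineered), and conclude by comparison. The only cosmetic difference is that the paper first substitutes $z=y^{1/2}$ to obtain the cleaner inequality $z'+C_2 z^{2\gamma+1}\le C_3(t)$ before invoking the comparison, whereas you work directly with $y$; your version also spells out the Gronwall step for $(y-z)_+$, which the paper simply asserts.
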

\begin{proof}
Multiplying the first equation of Problem (\ref{probi}) by $u$ and integrating in $\Omega$, we obtain
$$\frac12\frac{d}{dt}\int_{\Omega} u^2\ dx+\left(\int_{\Omega} u^2\ dx\right)^{\gamma}\int_{\Omega}|\nabla u|^2\ dx =\int_{\Omega}fu\ dx.$$
By the Poincaré and Cauchy inequalities, we can prove that
$$\frac12\frac{d}{dt}\int_{\Omega} u^2\ dx+C_2\left(\int_{\Omega}u^2\ dx\right)^{\gamma+1} \leq \left(\int_{\Omega}f^2\ dx\right)^{\frac12}\left(\int_{\Omega}u^2\ dx\right)^{\frac12}.$$
Hence $y=\int_{\Omega} u^2\ dx$ satisfies the differential inequality
$$\frac12y'+C_2y^{\gamma+1}\leq C_3(t)y^{\frac12},$$
where $C_3(t)=\left(\int_{\Omega} f^2\ dx\right)^{\frac12}$.
Setting $z=y^{\frac12}$, we arrive at
\begin{equation}\label{odi2}z'+C_2z^{2\gamma+1}\leq C_3(t).\end{equation}
Considering $A$ and $B$ as defined in (\ref{eqAB}), the solution of the ordinary differential equation
$$w'+C_1w^{2\gamma+1} = \frac{A}{(1+Bt)^{\frac{2\gamma+1}{2\gamma}}}$$
is the function
$$w=\frac{w_0}{(1+Bt)^{\frac{1}{2\gamma}}},$$
which is an upper bound for the solutions of (\ref{odi2}).
Reverting to $y$, we then obtain the desired estimate.
\end{proof}

\begin{thm}
If
$$\|f\|_{L_2(\Omega)}\leq A\left[1-Bt\right]_+^{-\frac{2\gamma+1}{2\gamma}},$$
with
\begin{equation}\label{eqAB2}
A<C_2\left(\int_{\Omega} u^2_0\ dx\right)^{\frac{2\gamma+1}{2}},\quad B=\frac{2\gamma A}{\left(\int_{\Omega} u^2_0\ dx\right)^{\frac12}}-2\gamma C_2\left(\int_{\Omega} u^2_0\ dx\right)^{\gamma}
\end{equation}
and $C_2$ the Poincaré constant, then the weak solution $u$ of Problem (\ref{probi}), with $\gamma<0$, satisfies
$$\int_{\Omega} u^2\ dx\leq \int_{\Omega} u^2_0\ dx\left[1-Bt\right]_+^{\frac{1}{|\gamma|}}.$$
\end{thm}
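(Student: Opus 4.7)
The plan is to follow the route of Theorem \ref{decay} almost verbatim, changing only the ansatz for the explicit supersolution so that it exhibits finite-time extinction rather than polynomial decay, as is expected when $\gamma<0$ and as matches the forcing-free theorem that precedes this one. I would test Problem (\ref{probi}) against $u$, integrate over $\Omega$, and apply Poincar\'e's inequality together with Cauchy--Schwarz on the right-hand side to obtain
$$
\tfrac12 y' + C_2\, y^{\gamma+1} \le \|f\|_{L_2(\Omega)}\, y^{1/2},\qquad y(t)=\int_{\Omega} u^2\,dx.
$$
Setting $z=y^{1/2}$ so that $y'=2zz'$, dividing by $z$ then yields the scalar inequality
$$
z' + C_2\, z^{2\gamma+1} \le \|f\|_{L_2(\Omega)} \le A\,[1-Bt]_+^{-(2\gamma+1)/(2\gamma)},
$$
which is the $\gamma<0$ analogue of (\ref{odi2}).

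Next, I would produce an explicit supersolution. Guided by the unforced extinction profile, the natural candidate is
$$
w(t)=w_0\,[1-Bt]_+^{1/(2|\gamma|)},\qquad w_0=\left(\int_{\Omega} u_0^2\,dx\right)^{1/2}.
$$
On the interval where $1-Bt>0$, a direct differentiation gives, using $2\gamma+1=1-2|\gamma|$ and $2\gamma=-2|\gamma|$,
$$
w'+C_2\, w^{2\gamma+1}=\left(-\tfrac{Bw_0}{2|\gamma|}+C_2\,w_0^{2\gamma+1}\right)[1-Bt]_+^{-(2\gamma+1)/(2\gamma)}.
$$
Forcing the prefactor in parentheses to equal $A$ and solving for $B$ reproduces exactly (\ref{eqAB2}); moreover the hypothesis $A<C_2(\int_\Omega u_0^2\,dx)^{(2\gamma+1)/2}$ is equivalent to $B>0$, which is needed so that $[1-Bt]_+$ represents a genuine extinction profile rather than a growing function. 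Thus $w$ solves the comparison ODE with equality and with $w(0)=z(0)$.

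Finally, I would invoke an ODE comparison principle to conclude $z(t)\le w(t)$, and square to recover $\int_\Omega u^2\,dx\le \int_\Omega u_0^2\,dx\,[1-Bt]_+^{1/|\gamma|}$; the estimate extends trivially past the extinction time $t=1/B$ by non-negativity of $y$. The main obstacle I anticipate is the rigorous justification of the comparison step near $t=1/B$, because the nonlinearity $s\mapsto s^{2\gamma+1}$ is singular at $0$ whenever $2\gamma+1<0$. A clean way out is to work on subintervals $[0,1/B-\delta]$, where both $w$ and $z$ are bounded away from zero and the nonlinearity is locally Lipschitz, deduce $z\le w$ there by a standard Gronwall argument on $(z-w)_+$, and then let $\delta\to 0^+$; past the extinction time the inequality is automatic. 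The rest is bookkeeping identical to Theorem \ref{decay}.
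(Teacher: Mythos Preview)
Your proposal is correct and follows essentially the same route as the paper: derive the scalar inequality $z'+C_2 z^{2\gamma+1}\le \|f\|_{L_2(\Omega)}$ for $z=(\int_\Omega u^2\,dx)^{1/2}$, verify that $w(t)=w_0[1-Bt]_+^{-1/(2\gamma)}$ solves the corresponding ODE with equality for the stated choice of $B$, and conclude by comparison. In fact you are more careful than the paper, which simply asserts the comparison without discussing the singularity of $s\mapsto s^{2\gamma+1}$ near the extinction time.
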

\begin{proof}
Multiplying the first equation of Problem (\ref{probi}) by $u$, integrating in $\Omega$ and arguing in the same way as in the proof of Theorem \ref{decay}, we prove that $z=\left(\int_{\Omega} u^2\ dx\right)^{\frac12}$
satisfies the differential inequality
\begin{equation}\label{odi3}z'+C_2z^{2\gamma+1}\leq C_3(t),\end{equation}
where $C_3(t)=\left(\int_{\Omega} f^2\ dx\right)^{\frac12}$.
Considering $A$ and $B$ as defined in (\ref{eqAB2}) and  the ordinary differential equation
\begin{equation}\label{eq_W}
w'+C_2w^{2\gamma+1} = A\left[1-Bt\right]_+^{-\frac{2\gamma+1}{2\gamma}},
\end{equation}
it is easy to verify that
$$w=w_0[1+Bt]_+^{-\frac{1}{2\gamma}}$$
is a solution of Equation (\ref{eq_W}) and an upper bound for the solutions of (\ref{odi3}).
Reverting to $y$, the proof is concluded.
\end{proof}

\section{Conclusions}

We proved the existence of a global in time weak solution for a nonlocal degenerate parabolic problem with $\gamma\geq 0$,  and the existence and uniqueness of a local in time weak solution for the problem with $\gamma< 0$.
We also obtained conditions on $\gamma$, $f$ and $u_0$ which ensure that that the solutions decay in time or become extinct in finite time.

\section*{Acknowledgements}

This work was partially supported by the research projects:\\ OE/MAT/UI0212/2014 - financed by FEDER through the - Programa Operacional Factores de Competitividade, FCT - Funda\c{c}\~ao para a Ci\^{e}ncia e a Tecnologia, Portugal and MTM2011-26119, MICINN, Spain.

\bibliographystyle{plain}
\bibliography{AAD}
\end{document}